\documentclass{article}
\usepackage{amsmath, amssymb, amsthm,mathrsfs,mathtools,comment}
\usepackage{geometry}
\title{Intermediate subalgebras of cross-product von Neumann algebras arising from weakly compact actions}
\author{Ionu\c{t} Chifan and Zachary Flood}
\date{}

\numberwithin{equation}{section}

\usepackage{xcolor}

\usepackage{hyperref}
\hypersetup{linktocpage}

\hypersetup{colorlinks,
    linkcolor={red!50!black},
    citecolor={blue!80!black},
    urlcolor={blue!80!black}}
\usepackage{float}

\newtheorem{theorem}{Theorem}[section]
\newtheorem{introthm}{Theorem}

\newtheorem{introcor}[introthm]{Corollary}
\newtheorem{lemma}[theorem]{Lemma}
\newtheorem{corollary}[theorem]{Corollary}
\newtheorem{claim}[theorem]{Claim}
\newtheorem{prop}[theorem]{Proposition}

\theoremstyle{definition}
\newtheorem{definition}[theorem]{Definition}

\theoremstyle{remark}

\newcommand{\M}{\mathcal M}
\newcommand{\A}{\mathcal A}
\newcommand{\B}{\mathcal B}
\newcommand{\Q}{\mathcal Q}
\newcommand{\eL}{\mathcal L}
\newcommand{\Z}{\mathbb Z}
\newcommand{\C}{\mathbb C}
\newcommand{\tr}{\operatorname{tr}}
\newcommand{\F}{\mathbb F}

\begin{document}

\maketitle
\begin{abstract} \noindent In this short paper, we establish the following Galois-type property for all measure preserving, weakly compact actions $G \curvearrowright (X,\mu) $ of a non-inner amenable group $G$ on a standard probability space $X$: for any intermediate von Neumann subalgebra  $\mathcal L(G)\subseteq \mathcal N \subseteq L^\infty (X)\rtimes G$, there is a factor $G\curvearrowright Y$ of $G \curvearrowright X$ such that $\mathcal N= L^\infty(Y)\rtimes G$. In particular, this holds for all pmp distal actions $G\curvearrowright X$, thereby answering positively an open problem from \cite{BCCMSW24} (which can also be traced back to \cite{CD20}). We also provide an example showing our result is optimal.
\end{abstract}

\section*{Introduction}

A natural Galois-type problem for group measure space von Neumann algebras is to understand to what extent intermediate von Neumann algebras reflect factors of the underlying action. More precisely, let $G$ be a countable icc group, let $G\curvearrowright (X,\mu)$ be a free ergodic pmp action, and set $\mathcal M=L^\infty(X)\rtimes G$. Every factor \(G\curvearrowright (Y,\nu)\) of \(G\curvearrowright (X,\mu)\) gives rise to an intermediate von Neumann algebra $$\mathcal L(G)\subseteq L^\infty(Y)\rtimes G\subseteq L^\infty (X)\rtimes G.$$ 

\noindent It is therefore natural to investigate a converse to this; namely, under what assumptions on $G$ and the action $G\curvearrowright X$ every intermediate von Neumann algebra

\begin{equation*}
\mathcal L(G)\subseteq \mathcal N\subseteq L^\infty(X)\rtimes G
\end{equation*}
is of the form $\mathcal N=L^\infty(Y)\rtimes G$ for some factor \(G\curvearrowright Y\) of \(G\curvearrowright X\). 
\vskip 0.05in
\noindent The icc assumption is natural in this problem, as it guarantees that $\mathcal L(G)$ is a factor. Moreover, without such an assumption the structure of intermediate algebras can be substantially different. For instance, if $G$ is abelian and $G\curvearrowright X$ is a free ergodic compact action, then $\mathcal L(G)$ is itself a Cartan subalgebra of $\M$ \cite{Pac85}, and intermediate algebras are governed by the much larger lattice of subequivalence relations rather than merely by factors of the original action. There are also more direct obstructions arising from the failure of the icc condition: the presence of nontrivial central elements, or more generally nontrivial central projections in $\mathcal L(G)$, can be used to construct von Neumann subalgebras which do not have the expected dynamical origin; see, for instance, the related construction in \cite[Proposition~3.1]{AJ23}.
\vskip 0.05in
\noindent This problem is closely related to the Neshveyev--St{\o}rmer rigidity phenomenon and to Popa's deformation/rigidity results for Bernoulli actions \cite{Po03,Po04}, although those results concern a different rigidity question. For the intermediate algebra problem above, Chifan--Das proved that the answer is positive for compact ergodic actions of arbitrary i.c.c.\ groups, as part of their more general description of intermediate crossed products associated with compact extensions \cite{CD20}. Independently, Jiang--Skalski obtained the result for profinite actions and observed that their argument extends to arbitrary compact actions \cite{JS21}. Chifan--Das also showed, for an arbitrary free ergodic pmp action, that the conclusion holds for intermediate algebras contained in the quasinormalizing algebra of $\mathcal L(G)$; in particular, all finite-index intermediate subfactors arise from finite transitive factors of the action. More recently, further positive results have been obtained for certain rigid actions of convergence groups and higher-rank lattices \cite{AGG24}. Nevertheless, the general problem remains widely open. It was explicitly formulated in \cite{BCCMSW24}, where it was pointed out that very little is known beyond the compact case; notably, it is not known in general even for distal actions of length two, that is, for a compact extension of a compact action see the comments after \cite[Open Problem, page 48]{BCCMSW24}.

\noindent In this short paper we make new progress on this problem by identifying a new class of groups/actions for which the intermediate property holds.

\begin{introthm}\label{mainthm:1} Let $G$ be a non-inner amenable group and let $G\curvearrowright^\sigma (\A, \tau)$ be a weakly compact trace-preserving action on an abelian von Neumann algebra $\A$. Then for any intermediate von Neumann subalgebra $\eL(G)\subseteq \Q \subseteq \A\rtimes G$ there is a $G$-invariant von Neumann subalgebra $\B\subseteq \A$ such that $\Q= \B\rtimes G$. 
\end{introthm}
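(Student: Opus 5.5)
The strategy is to reduce to a known criterion: an intermediate algebra $\eL(G)\subseteq \Q\subseteq \A\rtimes G$ is of the form $\B\rtimes G$ exactly when $\Q$ is spanned (in $L^2$) by its ``Fourier pieces''. Concretely, for $x=\sum_g a_g u_g\in \Q$, I want every coefficient $a_g$ to lie in $\Q$, i.e.\ $E_\A(xu_g^*)\in \Q$. Once this holds, set $\B=\Q\cap\A$. This is a von Neumann subalgebra of $\A$, and it is $G$-invariant because $u_g\in \Q$. Then $\Q\subseteq \B\rtimes G$ follows from Kaplansky density and $L^2$-approximation by Fourier partial sums. The reverse inclusion is clear. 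So the whole proof reduces to showing $E_\A(\Q)\subseteq \Q$, equivalently that $E_{\A}$ maps $\Q$ into $\Q\cap \A$.

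To get this, I would use the two hypotheses together. Weak compactness of $\sigma$ gives a net of unit vectors $\xi_n\in L^2(\A\bar\otimes\bar\A)$ that are asymptotically invariant under $\sigma_g\otimes\bar\sigma_g$. It also gives $\|(v\otimes\bar v)\xi_n-\xi_n\|_2\to 0$ for unitaries $v\in\A$, and $\langle (x\otimes 1)\xi_n,\xi_n\rangle\to\tau(x)$.

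Out of these I would build, following Ozawa--Popa, a state or conditional expectation on $\langle \A\rtimes G, e_{\A}\rangle$ or on $\B(L^2)$. The aim is that averaging $\mathrm{Ad}(u_g)$ over $g$ becomes available. Explicitly, consider the maps
\[ \Phi_n(x)=(\mathrm{id}\otimes\omega_{\xi_n})\big(\text{$x$ viewed inside }(\A\rtimes G)\bar\otimes \B(L^2\bar\A)\big).\]
These are unital completely positive maps on $\A\rtimes G$ with $\Phi_n(u_g)\approx \langle \bar\sigma_g\xi_n,\xi_n\rangle u_g\approx u_g$. A limit point $\Phi$ then fixes $\eL(G)$, is $\eL(G)$-bimodular, and acts on $\A$ by something like the trace.

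Non-inner amenability enters at this point. Since $G$ is not inner amenable, $\eL(G)'\cap(\A\rtimes G)^\omega\subseteq \A^\omega$-type objects are controlled. More precisely, there is no $\mathrm{Ad}(G)$-invariant mean on $G\setminus\{e\}$. By Effros's theorem this means $\eL(G)$ has no nontrivial asymptotically central sequences. In fact I would use the stronger spectral gap form: there exist $g_1,\dots,g_k$ and $C$ with $\|\xi-\langle\xi,\delta_e\rangle\delta_e\|\le C\sum_i\|u_{g_i}\xi u_{g_i}^*-\xi\|$ for $\xi\in\ell^2G$. I would then tensor this with the compact ($L^2(\A)$-finite-dimensional, via weak compactness) pieces.

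The key step is to show the following. For $x\in\Q$, the elements $u_g x u_g^*$, averaged via the weakly compact approximate invariant vectors, converge to a map $\Psi$ on $\Q$. This $\Psi$ preserves $\Q$, because $\Q$ is $\mathrm{Ad}(u_g)$-invariant and weakly closed, so limits of averages stay in $\Q$. Spectral gap should then force $\Psi$ to kill the off-diagonal parts $a_gu_g$, $g\neq e$, leaving $E_\A(x)$. Applying this to $xu_g^*$ gives every Fourier coefficient in $\Q$.

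The main obstacle I expect is this last identification, $\Psi=E_\A$ on $\Q$. It requires a uniform estimate showing that the conjugation action of $G$ on $L^2(\A\rtimes G)\ominus L^2(\A)\cong L^2(\A)\otimes\ell^2(G\setminus e)$ has spectral gap. Here $G$ acts diagonally by $\sigma_g\otimes\mathrm{Ad}$. Non-inner amenability alone gives spectral gap for $\mathrm{Ad}$ on $\ell^2(G\setminus e)$. Weak compactness should transfer it to the tensor product: the tensor of a representation without almost invariant vectors with a weakly compact (asymptotically ``compact'') one still has none. I would first try to prove this by contradiction, pairing almost invariant vectors of the tensor with the vectors $\xi_n$.
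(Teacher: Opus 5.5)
Your reduction is correct and is the paper's: by the commuting-square symmetry, $\mathbb E_\A(\Q)\subseteq\Q$ is equivalent to $\mathbb E_\Q(\A)\subseteq\A$. That is Claim~\ref{c2}, $\mathbb E_\Q(a)=\mathbb E_{\Q\cap\A}(a)$ for $a\in\A$, after which \cite[Theorem 3.2]{CD20} (or your Fourier-coefficient argument) finishes. The gap is that the step meant to prove this is never carried out, and as sketched it does not go through.

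The maps $\Phi_n=(\mathrm{id}\otimes\omega_{\xi_n})(\cdot)$ are not defined: $\xi_n$ lies in $L^2(\A\bar\otimes\bar\A)$, not $L^2(\bar\A)$, and no embedding of $\A\rtimes G$ is given. Moreover, a limit that is $\eL(G)$-bimodular and acts on $\A$ by the trace, as you suggest, is just $\mathbb E_{\eL(G)}$, which says nothing about $\Q$.

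For $\Psi$, the only $\Q$-preserving operations you use are the conjugations $\mathrm{Ad}(u_h)$, so on $\A$ any such $\Psi$ is a limit of convex combinations of the $\sigma_h$. The weights come from the $\xi_n$ and do not depend on $\Q$, and the argument must also cover $\Q=\A\rtimes G$. So you would need $\mathrm{id}_\A$ to be such a limit, with weights that at the same time spread out under conjugation enough to kill $\bigoplus_{g\neq e}L^2(\A)u_g$.

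Spectral gap does not give this. Iterating a symmetric average $\sum_h\mu(h)\mathrm{Ad}(u_h)$ with $e\in\operatorname{supp}\mu$ does kill the off-diagonal part. On the diagonal, however, it converges to the projection onto elements fixed by $\operatorname{supp}\mu$ (scalars, for an ergodic action when $\operatorname{supp}\mu$ generates $G$), not to the identity. Nor can weak compactness by itself supply such weights: the $\xi_n$ are kernels on $X\times X$, not weights on $G$. For instance, every action of an amenable group, including a free mixing one, is weakly compact, and there the only averages of the $\sigma_h$ tending to $\mathrm{id}_\A$ have weights collapsing to $\delta_e$.

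You have also misplaced the difficulty. The spectral gap you call the main obstacle, for $\sigma\otimes\mathrm{Ad}$ on $L^2(\A)\otimes\ell^2(G\setminus\{e\})\cong L^2(\A\rtimes G)\ominus L^2(\A)$, holds for every trace-preserving action by non-inner amenability alone. One applies the $\ell^2(G)$ estimate to $s\mapsto\|\eta_s\|_2$ (Theorem~\ref{specgap1}); weak compactness plays no role there.

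The missing idea is how to let weak compactness act as a $G$-equivariant approximate identity on $\A$ while spectral gap acts on $\Q$. The paper does this by doubling rather than by averaging on $\M$:
\begin{enumerate}
\item View $\xi_n\in L^1(\A\bar\otimes\A)\subset L^1(\M\bar\otimes\M)$ and apply $\mathbb E_{\M\bar\otimes\Q}$, which conditions only the second leg.
\item Because $u_g\otimes u_g\in\M\bar\otimes\Q$, the result stays asymptotically $\mathrm{Ad}(u_g\otimes u_g)$-invariant.
\item The spectral gap of the diagonal conjugation action on $L^2(\M\bar\otimes\M)$ relative to $\A\bar\otimes\A$ (Corollary~\ref{corspectralgap}, applied to square roots) then makes $\mathbb E_{\M\bar\otimes\Q}(\xi_n)$ $\|\cdot\|_1$-close to $\mathbb E_{\M\bar\otimes\A}\circ\mathbb E_{\M\bar\otimes\Q}(\xi_n)$ (Claim~\ref{c1}).
\item The untouched first leg still transfers elements of $\A$ across the tensor like the identity, via conditions (ii) and (iii). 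Testing against $a\otimes\mathbb E_\Q(a)$ and iterating with the von Neumann--Halperin alternating projections theorem yields $\mathbb E_\Q(a)=\mathbb E_{\Q\cap\A}(a)$.
\end{enumerate}
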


\noindent The notion of weakly compact action was introduced in \cite{OP07} as a natural generalization of the classical notions of profiniteness and compactness (or discrete spectrum). It played an important role in the study of uniqueness of Cartan subalgebras and has since had a significant impact on the classification of cross-product von Neumann algebras. For its precise definition see Definition \ref{def:weak compact}. In particular, the class of weakly compact actions contains all profinite and compact actions and, more generally, all distal actions. The latter fact was first established by J.\ Peterson and the first author in \cite{CP11}. For the reader's convenience, we include some of these arguments in this paper as well, see Section \ref{sec: weak compactness}.    

\noindent  In particular, Theorem \ref{mainthm:1}  solves affirmatively the aforementioned question which was left open in \cite{BCCMSW24}. In fact, more can be shown.  

\begin{introcor}\label{main: cor1} For any non-inner amenable group $G$ and any measure preserving free ergodic distal action on a standard probability space $G\curvearrowright^\sigma X$ its crossed product von Neumann algebra $L^\infty(X)\rtimes G$ satisfies the intermediate von Neumann algebra property.  
    
\end{introcor}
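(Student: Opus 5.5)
The corollary follows by combining Theorem \ref{mainthm:1} with the fact, recalled in Section \ref{sec: weak compactness} and due to \cite{CP11}, that every pmp distal action is weakly compact. So the plan has three steps: check that the action meets the hypotheses of Theorem \ref{mainthm:1}, apply the theorem, and translate its algebraic conclusion back into the language of factors of the action.

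\emph{Step 1: weak compactness.} Let $G\curvearrowright^\sigma X$ be free, ergodic, measure preserving and distal. The first step is to establish that $\sigma$ is weakly compact in the sense of Definition \ref{def:weak compact}. I would argue by induction along the distal tower
\[
X_0=\text{pt}\leftarrow X_1\leftarrow\cdots\leftarrow X_\alpha\leftarrow\cdots\leftarrow X_\eta=X,
\]
indexed by countable ordinals. At a successor stage the extension is compact. There, approximately invariant unit vectors in $L^2(X_\alpha)\bar\otimes L^2(\bar X_\alpha)$ lift to $L^2(X_{\alpha+1})\bar\otimes L^2(\bar X_{\alpha+1})$. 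The lift uses the finite-rank relative decomposition of the compact extension, namely finitely generated invariant submodules over $L^\infty(X_\alpha)$, to produce nets $\eta_n$ that are almost $\sigma\otimes\bar\sigma$-invariant and almost commute with $L^\infty(X_{\alpha+1})\otimes 1$. At limit ordinals, $L^\infty(X_\alpha)$ is the inductive limit of the earlier algebras, and a diagonal argument handles the stage. This is the step carrying the real content; since the paper presents it in Section \ref{sec: weak compactness}, I would simply invoke it.

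\emph{Step 2: apply the theorem.} Set $\A=L^\infty(X)$ with the trace induced by $\mu$. Theorem \ref{mainthm:1} then applies: for every intermediate $\eL(G)\subseteq\Q\subseteq L^\infty(X)\rtimes G$ it gives a $G$-invariant von Neumann subalgebra $\B\subseteq\A$ with $\Q=\B\rtimes G$.

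\emph{Step 3: pass to a factor.} Because $X$ is standard, $\B$ is countably generated in the separable algebra $L^\infty(X)$. Hence $\B=L^\infty(Y,\nu)$ for a standard probability space $(Y,\nu)$ together with a measure-preserving factor map $\pi:X\to Y$ satisfying $\pi_*\mu=\nu$. The $G$-invariance of $\B$ gives an action $G\curvearrowright Y$ for which $\pi$ is equivariant; this is Mackey's point realization of the restricted action. Thus $\Q=L^\infty(Y)\rtimes G$ with $G\curvearrowright Y$ a factor of $G\curvearrowright X$, which is exactly the intermediate von Neumann algebra property.

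Freeness and ergodicity are not needed for this deduction. They only ensure that $\M$ is a II$_1$ factor, which keeps the setting consistent with the problem of \cite{BCCMSW24}. The icc condition is automatic here, because non-inner amenable groups are icc.
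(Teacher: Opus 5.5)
Your proposal is correct and is the paper's proof: the corollary follows by combining Corollary~\ref{cor: dital implies weak compact} (free ergodic distal actions are weakly compact) with Theorem~\ref{thm: weak compact implies isp}, and your Step~3 just spells out the routine identification of the $G$-invariant subalgebra $\B\subseteq L^\infty(X)$ with $L^\infty(Y)$ for a factor $G\curvearrowright Y$. Your heuristic sketch of Step~1 (lifting almost invariant vectors directly up the tower) is not how the paper argues. The paper instead shows that each compact extension is relatively weakly compact, then uses transitivity of co-amenability and the limit lemma; since you invoke the paper's result rather than your sketch, this does not affect the argument.
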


\noindent We observe that our result is in fact  optimal in  natural sense. Specifically, if one consider actions that are not weakly compact the result does not hold true anymore. Indeed, building upon some ideas and constructions introduced by Amrutam-Glasner-Glasner in their recent work \cite[Propositions 5.2 and 5.4, Theorems 5.9 and 5.14]{AGG24b} we can show the following. 

\begin{introthm}\label{prop:main}
Consider the groups  $H=\Z_3$, $ \Gamma=\F_2$, $K=H^{(\Gamma)}$, and $G=K\rtimes\Gamma=H\wr\Gamma$. Let $X:=H^\Gamma$ with product uniform measure and consider the following action $G \curvearrowright X$. For every $ag\in G$ ($a\in K$, $g\in \Gamma$), $h\in \Gamma$ and $(x_h)_h\in X$ we let  
\begin{equation*}\label{eq:action}
 ((ag)\cdot x)_h=a_h+x_{g^{-1}h}.
\end{equation*}
Then $G$ is icc, non-inner amenable and $G \curvearrowright X$ is a pmp, free, weak mixing action.
There exists an intermediate $\mathrm{II}_1$ factor $\mathcal N$ such that $\mathcal L (G)\subsetneq \mathcal N\subsetneq L^\infty(X)\rtimes G$ and $\mathcal N\cap L^\infty(X)=\C1$.
In particular, there is no $G$-invariant von Neumann subalgebra
$\mathcal C\subset L^\infty(X)$ such that $\mathcal N=C\rtimes G$ inside the given
crossed product.
\end{introthm}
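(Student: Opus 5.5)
The plan is to split the statement into its routine part (the properties of $G$ and of the action) and the construction of $\mathcal N$.

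\textbf{Properties of $G$ and of the action.}

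First, $G=\Z_3\wr\F_2$ is icc. A nontrivial $a\in K$ has infinitely many conjugates under $\Gamma$, since its support gets translated. An element $ag$ with $g\neq e$ has infinitely many conjugates because $\F_2$ is icc; if $g$ were central one would also get infinitely many conjugates by conjugating with elements of $K$, but here no such $g\neq e$ occurs.

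Second, $G$ is non-inner amenable. I would invoke the known criterion that a wreath product $H\wr\Gamma$ with $\Gamma$ non-amenable is not inner amenable. Concretely, one checks that an asymptotically central sequence in $\ell^2(G\setminus\{e\})$ must concentrate on $K$, where conjugation by $\Gamma$ acts via a mixing, non-amenable translation action.

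Third, the action is pmp, and I would show it is free and weak mixing.
\begin{itemize}
\item For freeness, $ag$ fixes $x$ iff $x_h-x_{g^{-1}h}=a_h$ for all $h$. If $g\neq e$ this fails almost surely: along the infinite orbits of $g$ the coordinates are i.i.d. uniform, while $a$ has finite support. If $g=e$ and $a\neq 0$, the element moves every point.
\item For weak mixing, it suffices that $\Gamma$ acts by a Bernoulli shift, which is mixing.
\end{itemize}

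\textbf{Construction of $\mathcal N$.}

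I would follow the idea of Amrutam--Glasner--Glasner and use Fourier duality on the $K$-part. We have $L^\infty(X)=L^\infty(\widehat{K})$ with $X=H^\Gamma=\widehat{H^{(\Gamma)}}$. The $K$-translation action on $X$ is therefore dual to $K$ itself: characters $\chi_b$ of $X$, indexed by $b\in K$, satisfy $u_a\chi_b u_a^*=\omega(a,b)\chi_b$, where $\omega(a,b)=e^{2\pi i\langle a,b\rangle/3}$. Hence $L^\infty(X)\rtimes K\cong \mathcal L(K\times K)$ twisted by the bicharacter $\omega$, a Heisenberg-type algebra on which $\Gamma$ acts diagonally. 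Inside this picture I would pick a $\Gamma$-invariant ``twisted diagonal'' subgroup, for instance
\[
D=\{(b,\lambda b): b\in K\}
\]
for a suitable $\lambda\in\Z_3^\times$ (or an automorphism twist), and set
\[
\mathcal N=\{u_a,\ \chi_b u_{\lambda b}\}''\rtimes\Gamma .
\]
This algebra contains $\mathcal L(G)$ and is $\Gamma$-invariant.

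To get $\mathcal N\cap L^\infty(X)=\C1$, I would use Fourier expansion over $K\times K$. An element of $L^\infty(X)$ is supported on $K\times\{0\}$, while $\mathcal N$ is spanned by words supported on $D+(\{0\}\times K)$. I would choose $D$ so that this set meets $K\times\{0\}$ only at $0$; the conditional expectation argument then gives the claim. For this, $D$ must be replaced by a graph that is not a subgroup containing $K\times 0$. This step is delicate: the naive choice $D+(0\times K)=K\times K$ gives everything, so instead I would take $\mathcal N$ generated by $\mathcal L(G)$ together with $\chi_b u_{\phi(b)}$ for an injective non-identity homomorphism $\phi$. The set to control is then the subgroup generated by $(b,\phi(b))$ and $(0,a)$.

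This is the main obstacle, and I would consult the AGG construction to pick $\phi$, or to pass to an $\Gamma$-equivariant non-group construction making the span proper.

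Once the Fourier support of $\mathcal N$ is shown to be proper and to meet $L^\infty(X)$ trivially, the remaining points follow.
\begin{itemize}
\item $\mathcal N\neq\mathcal L(G)$, since it contains $\chi_b u_{\phi(b)}$.
\item $\mathcal N\neq\mathcal M$, since $\mathcal N\cap L^\infty(X)=\C1$.
\item $\mathcal N$ is a factor, because $\mathcal N'\cap\mathcal N\subseteq \mathcal L(G)'\cap\mathcal M$, which is trivial by weak mixing and freeness.
\end{itemize}
The final ``in particular'' is immediate: if $\mathcal N=\mathcal C\rtimes G$, then $\mathcal C=\mathcal N\cap L^\infty(X)=\C1$, forcing $\mathcal N=\mathcal L(G)$, a contradiction.
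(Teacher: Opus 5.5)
Your treatment of the group and the action (icc, non-inner amenability, freeness, weak mixing) is fine. So are the factoriality argument $\mathcal N'\cap\mathcal N\subseteq\mathcal L(G)'\cap\mathcal M=\C1$ (which really uses icc plus ergodicity) and the closing ``in particular''. The genuine gap is the construction of $\mathcal N$ itself: you leave it open, and the route you sketch cannot be completed.

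Any von Neumann algebra containing $\mathcal L(K)$ and an element $\chi_b u_{\phi(b)}$ also contains $\chi_b u_{\phi(b)}\,u_{\phi(b)}^*=\chi_b$. This is a non-scalar element of $L^\infty(X)$ when $b\neq 0$; equivalently, the subgroup generated by $(b,\phi(b))$ and $\{0\}\times K$ contains $(b,0)$. More generally, work in your picture $L^\infty(X)\rtimes K\cong\mathcal L_\omega(K\times K)$ and let $\pi_1$ be the first-coordinate projection. Suppose an algebra is spanned by the Weyl unitaries over a subgroup $S\supseteq\{0\}\times K$. Then $S=\pi_1(S)\times K$, so the algebra equals $\mathcal L_\omega(\pi_1(S)\times\{0\})\rtimes K$, which has the form $\mathcal C\rtimes K$ with $\mathcal C\subseteq L^\infty(X)$. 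So no choice of $D$, $\lambda$ or $\phi$, and no $\Gamma$-invariant ``Fourier support'' condition, can produce the counterexample. The intermediate algebra must fail to be spanned by Fourier modes.

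This is exactly what the paper's construction achieves. In each coordinate $\mathbb M_3(\C)=\ell^\infty(\Z_3)\rtimes\Z_3$, take $\mathcal Q=\{p\}'\cap\mathbb M_3(\C)\cong\C\oplus\mathbb M_2(\C)$, where $p=\frac13(1+u+u^2)$ is the projection onto the constants. A Weyl unitary $\chi_b u_c$ commutes with $p$ only if $b=0$. Yet $\mathcal Q$ is $5$-dimensional, strictly contains $\mathcal B=\mathcal L(\Z_3)$, and satisfies $\mathcal Q\cap\ell^\infty(\Z_3)=\C1$. Using $L^\infty(X)\rtimes K=\bar\otimes_{h\in\Gamma}\mathbb M_3(\C)$, set $\mathcal P=\bar\otimes_{h\in\Gamma}\mathcal Q$, which is $\Gamma$-invariant and contains $\mathcal L(K)$, and put $\mathcal N=\mathcal P\rtimes\Gamma$.

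Fourier expansion over $\Gamma$ gives $\mathcal N\cap(L^\infty(X)\rtimes K)=\mathcal P$. If $f\in\mathcal P\cap L^\infty(X)$, then $f$ commutes with $p_h=\frac13(1+u_h+u_h^2)$. With $\beta_h=\operatorname{Ad}(u_h)|_{L^\infty(X)}$, comparing Fourier coefficients in $3[f,p_h]=(f-\beta_h(f))u_h+(f-\beta_h^2(f))u_h^2=0$ gives $\beta_h(f)=f$ for all $h$. Hence $f$ is scalar by ergodicity of $K\curvearrowright X$. Strictness $\mathcal L(G)\subsetneq\mathcal N$ comes from $\mathcal B\subsetneq\mathcal Q$ in a single coordinate, and $\mathcal N\neq\mathcal M$ follows from $\mathcal N\cap L^\infty(X)=\C1$.
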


\noindent In particular, this example provides a negative answer to
 \cite[Open Problem page 48]{BCCMSW24}. It also suggests that the
intermediate subalgebra property depends on a delicate interplay
between the acting group and the action. For weakly mixing actions,
non-inner amenability of the acting group is not sufficient, and it
is natural to ask whether additional assumptions, such as triviality
of the amenable radical, can lead to positive results. Determining
which combinations of group-theoretic and dynamical assumptions
ensure this property appears to be a subtle problem and an
interesting direction for future work.

\noindent\textbf{Acknowledgements.} The first author was supported in part by the NSF grant DMS-2452247. The second author was supported in part by the NSF grant DMS-2452247 and the Erwin and Peggy Kleinfeld Graduate Fellowship. 

\noindent \textbf{AI statement.}  ChatGPT Astra 6 was used to identify several relevant references and to help constructing the counterexample presented in Theorem C. This tool was also used to improve the language and organization of some sections of the paper.    


\section{Spectral gap}

\noindent In this section we recall several spectral gap inequalities that will be very useful to deriving our main results. 

\begin{theorem}\label{specgap1} Let $G$ be a non-inner amenable group and let $G\curvearrowright^{\sigma} (\Q, \tau)$ be a trace preserving action of a tracial von Neumann algebra. Denote by $\M =\Q\rtimes G$ the corresponding crossed product von Neumann algebra. Then there exist  a finite set $K\subset G$ and $C>0$ such that for all $\xi\in L^2(\M)$ we have 

\begin{equation}\label{spectralgapreltoQ}
    \sum_{g\in K}\| u_g \xi -\xi u_g \|^2_2\geq C\|\xi- \mathbb E_{\Q}(\xi)\|^2_2.
\end{equation}  
\end{theorem}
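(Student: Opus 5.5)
We decompose $L^2(\M)$ along the group: every $\xi\in L^2(\M)$ has a Fourier expansion $\xi=\sum_{h\in G}\xi_h u_h$ with $\xi_h\in L^2(\Q)$ and $\|\xi\|_2^2=\sum_h\|\xi_h\|_2^2$. Moreover $\mathbb E_{\Q}(\xi)=\xi_e u_e$. We then compute the commutator with $u_g$:
\[
u_g\xi u_g^* = \sum_h \sigma_g(\xi_h)\, u_{ghg^{-1}} .
\]
Hence $\|u_g\xi-\xi u_g\|_2^2=\|u_g\xi u_g^*-\xi\|_2^2=\sum_{k}\|\sigma_g(\xi_{g^{-1}kg})-\xi_k\|_2^2$. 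So the problem reduces to a statement about the unitary representation $\pi$ of $G$ on $\mathcal H=\ell^2(G\setminus\{e\})\bar\otimes L^2(\Q)\cong \bigoplus_{h\neq e} L^2(\Q)$, given by $\pi_g(\delta_h\otimes\eta)=\delta_{ghg^{-1}}\otimes\sigma_g(\eta)$. The vector $\xi-\mathbb E_\Q(\xi)$ corresponds to $(\xi_h)_{h\ne e}\in\mathcal H$, and $\sum_{g\in K}\|u_g\xi-\xi u_g\|_2^2\ge\sum_{g\in K}\|\pi_g\zeta-\zeta\|^2$, where $\zeta$ is this vector; the $e$-coordinate only adds a nonnegative term. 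So it suffices to show that $\pi$ has spectral gap, i.e. does not weakly contain the trivial representation. Then the standard characterization gives a finite $K$ and $C>0$ with $\sum_{g\in K}\|\pi_g\zeta-\zeta\|^2\ge C\|\zeta\|^2$ for all $\zeta$.

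Next we identify $\pi$ as a tensor product, $\pi\cong \mathrm{ad}^0\otimes \sigma$. Here $\mathrm{ad}^0$ is the conjugation representation of $G$ on $\ell^2(G\setminus\{e\})$, and $\sigma$ is the Koopman-type representation of $G$ on $L^2(\Q)$ induced by the trace-preserving action; the latter is unitary because $\sigma$ preserves $\tau$. Non-inner amenability of $G$ means exactly that $\mathrm{ad}^0$ has no almost invariant unit vectors, i.e. $1\not\prec \mathrm{ad}^0$. This is Effros' characterization: an almost invariant vector would yield a conjugation-invariant mean on $G\setminus\{e\}$ via $|\zeta|^2$, and conversely.

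The key step, and the main obstacle, is to pass from $1\not\prec\mathrm{ad}^0$ to $1\not\prec \mathrm{ad}^0\otimes\sigma$, since $\sigma$ is arbitrary. In general a tensor product with an arbitrary representation can create almost invariant vectors, so we must use the specific structure. We would argue with absolute values directly rather than through the abstract representation. Suppose $\zeta^n=(\zeta^n_h)_{h\neq e}$ are unit vectors with $\|\pi_g\zeta^n-\zeta^n\|\to0$ for every $g$. Define $f_n\in\ell^1(G\setminus\{e\})$ by $f_n(h)=\|\zeta^n_h\|_2^2$. Since $\sigma_g$ is isometric on $L^2(\Q)$, we have $\|f_n(g^{-1}\cdot g)-f_n\|_1\le \|\pi_g\zeta^n-\zeta^n\|\,(2)$ by Cauchy--Schwarz, because $\big|\|a\|^2-\|b\|^2\big|\le\|a-b\|(\|a\|+\|b\|)$. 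Thus $f_n$ are almost conjugation-invariant probability vectors on $G\setminus\{e\}$. A weak$^*$ limit point of $f_n$ gives a conjugation-invariant mean on $G\setminus\{e\}$, contradicting non-inner amenability. Equivalently, $\sqrt{f_n}$ is almost invariant for $\mathrm{ad}^0$ by the Powers--Størmer inequality.

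Hence $\pi$ has no almost invariant vectors. Finally, the standard compactness argument applies: if for every finite $K$ and every $C=1/n$ the inequality failed, we could produce almost invariant unit vectors. Since $G$ is countable, we enumerate it and take $K_n$ increasing. This yields $K$ and $C$, which gives \eqref{spectralgapreltoQ}.
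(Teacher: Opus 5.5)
Your proposal is correct and follows essentially the same route as the paper. You Fourier-decompose $\xi$, reduce to the twisted conjugation representation on $L^2(\M)\ominus L^2(\Q)$, and push almost invariant vectors down to $G\setminus\{e\}$ via $h\mapsto\|\xi_h\|_2$. The paper uses exactly your $\sqrt{f_n}$ in $\ell^2(G)$ together with the spectral-gap form of non-inner amenability, rather than an $\ell^1$/invariant-mean argument. You also make explicit the standard compactness step from ``no almost invariant vectors'' to a finite $K$ and $C>0$, which the paper leaves implicit in its proof by contradiction.
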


\begin{proof} Since $G$ is non-inner amenable there exist a finite subset $F \subset G$ and a scalar $C>0$ such that for all $\eta\in \ell^2(G)$ we have 

\begin{equation}\label{inneramenab}
    \sum_{g\in F}\|u_g \eta -\eta u_g\|^2_2\geq C\|\eta-\tau(\eta)\|_2^2.
\end{equation}    

\noindent To show \eqref{spectralgapreltoQ}, it suffices to show there exists a finite subset $K\subset G$ and $C>0$ such that for every $\eta\in L^2(\M)\ominus L^2(\Q)=:\mathcal{H}$ we have $\sum_{g\in K}\|u_g\eta u_g^*-\eta\|_2^2\geq C\|\eta\|_2^2$. Indeed, for $\xi\in L^2(\M)$ and $\eta:=\xi-\mathbb E_{\Q}(\xi)$, we have $\mathbb P_{\mathcal H}(u_g\xi u_g^*-\xi)=(u_g\xi u_g^*-\xi)-\mathbb E_{\Q}(u_g\xi u_g^*-\xi)=u_g\eta u_g^*-\eta$. 
\vskip 0.03in
\noindent Now, we assume by contradiction that there exists a sequence $(\eta_n)_n\subset \mathcal{H}$ with $\|\eta_n\|_2=1$ for every $n$ and for which $\lim_n\|u_g\eta_nu_g^*-\eta_n\|_2=0$ for every $g\in G$. Write $\eta_n=\sum_{s\in G}\eta_{n,s}\otimes \delta_s$, for $\eta_{n,s}\in L^2(\Q)$ and $\eta_{n,1}=0$. Define $f_n(s)=\|\eta_{n,s}\|_2$ and notice that $f_n\in\ell^2(G)$ with $\|f_n\|_2=\|\eta_n\|_2$. Moreover,

\begin{align*}
    \|f_n-f_n\circ\text{Ad}(g^{-1})\|_2^2&=\sum_{g\in G}\left|\|\eta_{n,s}\|_2-\|\eta_{n,g^{-1}sg}\|_2\right|^2\\
    &\leq \sum_{s\in G}\|\eta_{n,s}-\sigma_g(\eta_{n,g^{-1}sg})\|_2^2=\left\|\sum_{s\in G}(\eta_{n,s}-\sigma_g(\eta_{n,g^{-1}sg}))\otimes\delta_s\right\|_2^2\\
    &=\left\|\sum_{s\in G}\eta_{n,s}\otimes\delta_s-\sum_{s\in G}\sigma_g(\eta_{n,s})\otimes\delta_{gsg^{-1}}\right\|_2^2\\
    &=\|\eta_n-u_g\eta_nu_g^*\|_2^2\to 0.
\end{align*}

\noindent From \eqref{inneramenab}, this implies $\lim_n\|f_n-f_n(e)\|_2=0$. Since $f_n(e)=0$, it follows that $\lim_n\|\eta_n\|_2=\lim_n\|f_n\|_2=0$, a contradiction to the fact $\|\eta_n\|_2=1$ for all $n$.
\end{proof}

\vspace{1mm}

\begin{corollary}\label{corspectralgap} Let $G$ be  a non-inner amenable group and let $G\curvearrowright (\A, \tau)$ be a trace preserving action of a tracial von Neumann algebra.  Denote by $\M =\A\rtimes G$ the corresponding crossed product von Neumann algebra. Then there exist  a finite set $K\in G$ and $S>0$ such that for all $\xi\in L^2(\M\bar \otimes \M)$ we have \begin{equation}
        \sum_{g\in K}\| (u_g \otimes u_g)\xi -\xi (u_g \otimes u_g) \|^2_2\geq S\|\xi- \mathbb E_{\A \bar\otimes \A }(\xi)\|^2_2.
    \end{equation}      
\end{corollary}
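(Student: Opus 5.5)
The plan is to reduce Corollary \ref{corspectralgap} directly to Theorem \ref{specgap1} by realizing $\M\bar\otimes\M$ as a crossed product. Consider the diagonal action of $G$ on $\Q:=\A\bar\otimes\M$ given by $\rho_g = \sigma_g\otimes \mathrm{Ad}(u_g)$. It is trace preserving. The claim to establish first is the identification
\begin{equation*}
\M\bar\otimes\M = (\A\rtimes G)\bar\otimes \M \cong (\A\bar\otimes \M)\rtimes_\rho G,
\end{equation*}
where the canonical unitaries of the right-hand side are $u_g\otimes u_g$. To see this, note that $\M\bar\otimes\M$ is generated by $\A\otimes 1$, $1\otimes\M$ and $u_g\otimes 1$, equivalently by $\A\bar\otimes\M$ together with $u_g\otimes u_g$, since $u_g\otimes 1=(u_g\otimes u_g)(1\otimes u_g^*)$. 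Moreover $(u_g\otimes u_g)$ implements $\rho_g$ on $\A\bar\otimes\M$. Finally, the trace satisfies $\tau\big(x(u_g\otimes u_g)\big)=0$ for $x\in\A\bar\otimes\M$ and $g\neq e$, because the first leg then has vanishing trace. So the Fourier expansion relative to $\A\bar\otimes\M$ is the crossed product one, and the conditional expectation onto $\A\bar\otimes\M$ is the crossed-product expectation.

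Applying Theorem \ref{specgap1} to this crossed product gives a finite set $K_1\subset G$ and $C_1>0$ such that
\begin{equation*}
\sum_{g\in K_1}\|(u_g\otimes u_g)\xi-\xi(u_g\otimes u_g)\|_2^2\geq C_1\|\xi-\mathbb E_{\A\bar\otimes\M}(\xi)\|_2^2 .
\end{equation*}
By symmetry, using the flip automorphism, we obtain the same inequality with $\M\bar\otimes\A$ in place of $\A\bar\otimes\M$, with some $K_2,C_2$.

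The last step combines the two inequalities. Set $K=K_1\cup K_2$. The expectations $E_1=\mathbb E_{\A\bar\otimes\M}$ and $E_2=\mathbb E_{\M\bar\otimes\A}$ commute, since they are tensor products of expectations, and $E_1E_2=\mathbb E_{\A\bar\otimes\A}$. Then
\begin{equation*}
\|\xi-E_1E_2\xi\|_2\le \|\xi-E_1\xi\|_2+\|E_1(\xi-E_2\xi)\|_2\le \|\xi-E_1\xi\|_2+\|\xi-E_2\xi\|_2 ,
\end{equation*}
so $\|\xi-\mathbb E_{\A\bar\otimes\A}(\xi)\|_2^2\le 2(\|\xi-E_1\xi\|_2^2+\|\xi-E_2\xi\|_2^2)$. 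This yields the corollary with $S=\min(C_1,C_2)/4$.

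The only point requiring care is the crossed-product identification in the first step. This is a routine check of generators and of the trace condition, and once it is in place everything else is formal.
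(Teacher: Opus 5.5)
Your proof is correct and takes essentially the same route as the paper. The paper also realizes $\M\bar\otimes\M$ as $(\M\bar\otimes\A)\rtimes d(G)$ and as $(\A\bar\otimes\M)\rtimes d(G)$ (via $G\times G=(G\times\{1\})\rtimes d(G)$), applies Theorem~\ref{specgap1} to each, and combines the two estimates. The differences are minor. The paper combines them by applying the second inequality to $\eta=\mathbb E_{\M\bar\otimes\A}(\xi)$ and using the Pythagorean identity, rather than your triangle-inequality argument. Your constant $\min(C_1,C_2)/4$ also correctly absorbs a possible overlap of $K_1$ and $K_2$, which the paper's step $\sum_{g\in K}\geq\sum_{g\in E}+\sum_{g\in F}$ tacitly ignores.
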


\begin{proof} Notice that we have the canonical semidirect product decomposition $G\times G = (G\times \{1\}) \rtimes d(G)$, where $d(G)=\{(g,g) \,|\, g\in G \}$ and the action of $d(G)$ on $G\times \{1\}$ is given by conjugation. Therefore, we have the cross-product decomposition $\M\bar\otimes \M = ((\A \bar \otimes \A)\rtimes (G\times \{1\}))\rtimes d(G)= (\M \bar \otimes \A)\rtimes d(G)$.
\vskip 0.05in
\noindent By Theorem \ref{specgap1} there exists a finite subset $E \subset d(G)$ and a scalar $C>0$ such that for all $\xi\in L^2(\M\bar{\otimes}\M)$, we have

\begin{equation}\label{specgap2}
    \sum_{g\in E}\| (u_g \otimes u_g)\xi -\xi (u_g \otimes u_g) \|^2_2\geq C\|\xi- \mathbb E_{\M \bar\otimes \A }(\xi)\|^2_2.
\end{equation}

\noindent Similarly, using the decomposition  $G\times G= (\{1\}\times G)\rtimes d(G)$, there exists a finite subset $F\subset d(G)$ and a scalar $D>0$ such that for all for $\eta\in L^2(\M\bar{\otimes}\M)$ we have

\begin{equation}\label{specgap3}
    \sum_{g\in F}\| (u_g \otimes u_g)\eta -\eta(u_g \otimes u_g) \|^2_2\geq D\|\eta- \mathbb E_{\A \bar\otimes \M }(\eta)\|^2_2.
\end{equation}

\noindent Denote by $K:=E\cup F$ and $S:=\min\{C,D\}>0$. Now fix $\xi\in L^2(\M\bar{\otimes}\M)$, and let $\eta:=\mathbb E_{\M\bar{\otimes}\A}(\xi)$. We will apply \eqref{specgap2} for $\xi$ and \eqref{specgap3} for $\eta$. First, notice that $\mathbb E_{\A\bar{\otimes}\M}(\eta)=\mathbb E_{\A\bar{\otimes}\M}(\mathbb E_{\M\bar{\otimes}\A}(\xi))=\mathbb E_{\A\bar{\otimes}\A}(\xi)$, and that $\|(u_g\otimes u_g)\xi-\xi(u_g\otimes u_g)\|_2\geq \|\mathbb E_{\M\bar{\otimes}\A}((u_g\otimes u_g)\xi-\xi(u_g\otimes u_g))\|_2=\|(u_g\otimes u_g)\eta-\eta(u_g\otimes u_g)\|_2$.
\vskip 0.05in
\noindent Thus, adding inequalities \eqref{specgap2} and \eqref{specgap3} we get that 

\begin{align*}
    \sum_{g\in K}\| (u_g \otimes u_g)\xi -\xi (u_g \otimes u_g) \|^2_2&\geq \sum_{g\in E}\| (u_g \otimes u_g)\xi -\xi (u_g \otimes u_g) \|^2_2+\sum_{g\in F}\| (u_g \otimes u_g)\eta -\eta (u_g \otimes u_g) \|^2_2\\
    &\geq S(\|\xi- \mathbb E_{\M \bar\otimes \A }(\xi)\|^2_2+\|\mathbb E_{\M\bar{\otimes}\A}(\xi)-\mathbb E_{\A\bar{\otimes}\A}(\xi)\|_2^2)\\
    &=S\|\xi-\mathbb E_{\A\bar{\otimes}\A}(\xi)\|_2^2.
\end{align*}
\end{proof}

\section{Weakly compact actions}\label{sec: weak compactness}

We recall some notions from ergodic theory.

\begin{definition}
    Let $(X,\mathcal{D},\mu)$ be a standard probability space and let $G$ be a countable group.  
    A \emph{probability measure-preserving (abbrev.\ pmp)} action of $G$ on $(X,\mathcal{D},\mu)$ is a homomorphism
     $G \to \mathrm{Aut}(X,\mathcal{D},\mu)$,
    given by  $\gamma \mapsto T_\gamma$,
  where each $T_\gamma$ is a measurable bijection preserving $\mu$. This will be denoted by $G \curvearrowright (X, \mathcal D, \mu)$ or, very often, when the $\sigma$-algebra $\mathcal D$ is self-understood, simply $G \curvearrowright(X, \mu)$.
\end{definition}

\begin{definition}
    Let $G \curvearrowright (X,\mathcal{D},\mu)$ and $G \curvearrowright (Y,\mathcal{C},\nu)$ be pmp actions.  
    A \emph{factor map} is a measurable, $G$-equivariant map $\pi:X\to Y$ such that $\pi_*\mu=\nu$.  
    We say that $G \curvearrowright (X,\mu)$ is an \emph{extension} of $G \curvearrowright(Y,\nu)$ and write $G \curvearrowright (X,\mu) \to G \curvearrowright (Y,\nu)$.
    \end{definition}
\noindent To introduce our results we also recall the notion of compact extensions. 
\begin{definition}\label{compextensionpmp}
    Let $G \curvearrowright (X,\mu) \to G \curvearrowright(Y,\nu)$ be an extension. We say that it is a \emph{compact extension} if $L^2(X,\mu)$ is the closed linear span of its finitely generated, $G$-invariant $L^\infty(Y,\nu)$-submodules. Here a closed $L^\infty(Y,\nu)$-submodule is finitely generated if it is the $\|\cdot\|_2$-closed linear span of $L^\infty(Y,\nu)\xi_1,\dots,L^\infty(Y,\nu)\xi_k$ for some $\xi_1,\dots,\xi_k\in L^2(X,\mu)$.
\end{definition}

\noindent With these preparations at hand, we now recall the notion of distal tower from \cite{Zi76,Fu77}.

\begin{definition}
    Let $G \curvearrowright (X,\mathcal{D},\mu)$ be a pmp action. A \emph{distal tower} (or \emph{compact tower}) is a transfinite sequence of factors
    \begin{equation*}
    G \curvearrowright(X_0,\mu_0) \leftarrow G \curvearrowright (X_1,\mu_1) \leftarrow \cdots
    \leftarrow G \curvearrowright (X_\alpha,\mu_\alpha) \leftarrow \cdots
    \end{equation*}
    indexed by ordinals, satisfying the following conditions:

\begin{enumerate}
    \item $X_0$ is the trivial one-point action.
    \item For every successor ordinal $\alpha+1$, the extension $G \curvearrowright(X_{\alpha+1},\mu_{\alpha+1}) \to G \curvearrowright (X_\alpha,\mu_\alpha)$ is compact.
    \item For every limit ordinal $\lambda$, the system $G \curvearrowright (X_\lambda,\mu_\lambda)$ is the inverse limit of
    the systems $G \curvearrowright(X_\alpha,\mu_\alpha)$ for $\alpha<\lambda$.
    \item There exists an ordinal $\eta$ such that $G \curvearrowright (X_\eta,\mu_\eta)$ is isomorphic to $G \curvearrowright (X,\mu)$.
\end{enumerate}
\end{definition}

\begin{definition}
    A pmp action $G \curvearrowright(X,\mathcal{D},\mu)$ is called \emph{distal} if it admits a distal tower as above; equivalently, it is obtained from the trivial system by iteratively taking compact extensions
    (and inverse limits at limit stages).
\end{definition}

\noindent For an arbitrary ergodic pmp action $G\curvearrowright(X,\mu)$, we also consider its \emph{canonical Furstenberg--Zimmer tower}. Starting with the trivial factor, at every successor stage we take the maximal compact intermediate extension inside $X$, and at every limit stage we take the inverse limit. Since $(X,\mu)$ is a standard probability space, this tower stabilizes at a countable ordinal $\eta$. Its terminal factor $X_\eta$ is the maximal distal factor of $X$, and the extension $X\to X_\eta$ is relatively weakly mixing. The action is distal precisely when $X_\eta=X$, modulo null sets; see \cite{Zi76,Fu77}.
\vskip 0.05in

\noindent To establish our results in this section, we shall translate many of the notions above into the language of von Neumann algebras.

\begin{definition}
    Let $G$ be a countable, discrete group and $(\B,\tau)$ a tracial von Neumann algebra. An \emph{action} $\sigma$ of $G$ on $(\B,\tau)$, denoted by $G \curvearrowright^\sigma\mathcal B$  is a homomorphism from $G$ into $\text{Aut}(\B)$. We say an action $\sigma$ is \emph{trace-preserving} if its range lies in $\text{Aut}(\B,\tau)$, the group of trace-preserving automorphisms of $\B$, i.e.\ $\tau(\sigma_g(x))=\tau(x)$ for all $x\in \mathcal B$, $g\in G$.
\end{definition}

\noindent In the setting of abelian von Neumann algebras with separable predual, any pmp action $G\curvearrowright^\sigma (X,\mu)$ of a countable discrete group $G$ on a standard probability space $(X,\mu)$ yields a pullback action of $G$ on the abelian von Neumann algebra $L^\infty(X,\mu)$ given by $(gf)(x)=f(g^{-1}x)$ for all $x\in X$, $g\in G$, $f\in L^\infty(X)$. The canonical trace on $L^\infty(X,\mu)$ given by $f\mapsto \int_Xf\ d\mu$ is clearly preserved by this action under the assumption it is pmp. Conversely, any trace-preserving action of $G$ on an abelian von Neumann algebra $\A=L^\infty(X,\mu)$ with separable predual sends indicator functions of $\mu$-measurable sets to indicator functions of sets of equal $\mu$-measure, thereby yielding a pmp action of $G$ on the underlying probability space $(X,\mu)$.

\begin{definition}
    A \emph{(trace-preserving) extension of von Neumann algebras} $G\curvearrowright(\A\subset \B)$ is an inclusion of von Neumann algebras $\A\subset \B$ and a (trace-preserving) action $G\curvearrowright \B$ that leaves the subalgebra $\A$ $G$-invariant.
\end{definition}
\noindent Next we recall the notion of compact extension of tracial von Neumann algebras from \cite[Definition 3.9]{CD20}.
 \begin{definition}\label{compactext}
    We say an extension action of tracial von Neumann algebras $G\curvearrowright^\sigma(\A\subset \B)$ is \emph{compact} if there exists $\mathcal{F}\subset \B$ satisfying:
    \begin{enumerate}
        \item $\overline{\text{span}\{\mathcal{F}\}}^{\|\cdot\|_2}=L^2(\B)$.
        \item For every $f\in \mathcal{F}$ and $\varepsilon>0$, there exist $\xi_1,\dots,\xi_n\in L^2(\B)$ so that for every $g\in G$, there exist $\kappa_i(g)\in \A$ for all $1\leq i\leq n$ so that $\sup_{1\leq i\leq n,g\in G}\|\kappa_i(g)\|_\infty<\infty$ and
        \begin{equation*}
            \left\lVert\sigma_g(f)-\sum_{i=1}^n\kappa_i(g)\xi_i\right\rVert_2<\varepsilon
        \end{equation*}
    \end{enumerate}
\end{definition}

\noindent Given a pmp action $G\curvearrowright (X,\mu)$, it can be shown that $G \curvearrowright(X,\mu)\to G \curvearrowright (Y,\nu)$ is a (compact) extension as in Definition \ref{compextensionpmp} if and only if $G\curvearrowright(L^\infty(Y,\nu)\subset L^\infty(X,\mu))$ is a (compact) trace-preserving extension as in Definition \ref{compactext}.
\vskip 0.05in
\noindent In addition to compactness, we consider a strictly weaker property of extensions called \emph{weak compactness}. Unlike compactness, this property is transitive for pairs of extensions and thus it is more robust for applications.

\begin{definition}\label{def:weak compact} (\cite[Definition 3.1]{OP07}, in its equivalent $L^1$-formulation) A trace preserving action $G\curvearrowright^\sigma (\A, \tau)$ on an abelian von Neumann algebra $\A$ is called weakly compact if there exists a net of positive elements $(\xi_n)_n \subset L^1(\A\bar \otimes \A)$ of norm one such that 

\begin{enumerate}
    \item $
    \lim_n\| \sigma_{g}\otimes \sigma_g(\xi_n)-\xi_n\|_1=0$ for all $g\in G$;
    \item $\lim_n\| (a \otimes \bar a) \xi_n -\xi_n\|_1=0$ for all $a\in \mathscr U (\A)$;
    \item $\mathbb E_ {\A\otimes 1} (\xi_n)=\mathbb E_{1\otimes \A}(\xi_n)=1$ for all $n$.
\end{enumerate}    
\end{definition}

\noindent Next we recall the following natural generalization of this weak compactness property to the case of extensions. This was also considered in the prior work \cite{CP11}.

\noindent To keep the $L^1$-formulation, we first specify the relative product. For an inclusion of abelian tracial von Neumann algebras $\A\subset\B$, denote by $\B\bar\otimes_\A\B$ the abelian von Neumann algebra obtained by the GNS construction from $\B\odot\B$ with respect to the state given by
\begin{equation*}
    \tau_\A(b\otimes c)=\tau(\mathbb E_\A(b)\mathbb E_\A(c)),\text{ for all } b,c\in\B.
\end{equation*}
We continue to write $b\otimes c$ for the corresponding relative elementary tensors. When $\A=L^\infty(Y,\nu)\subset\B=L^\infty(X,\mu)$, this is $L^\infty(X\times_YX,\mu\otimes_Y\mu)$, where $\mu\otimes_Y\mu=\int_Y\mu_y\otimes\mu_y\,d\nu(y)$ and $\mu=\int_Y\mu_y\,d\nu(y)$ is the disintegration over $Y$. The two canonical marginal maps are the trace-preserving conditional expectations given by
\begin{align*}
    \mathbb E_{\B\otimes1}(b\otimes c)=b\mathbb E_\A(c)\otimes1\text{ and }
    \mathbb E_{1\otimes\B}(b\otimes c)=1\otimes\mathbb E_\A(b)c.
\end{align*}
These extend to $L^1$, and the action on the relative product is $\sigma_g\otimes\sigma_g$. 

\noindent We also have a canonical unitary identification $L^2(\B\bar\otimes_\A\B,\tau_\A)\cong L^2(\langle\B,e_\A\rangle,\text{Tr})$ given by $ b\otimes c\longmapsto be_\A c$; here $\text{Tr}$ is the canonical semifinite trace, characterized by $\text{Tr}(be_\A c)=\tau(bc)$. Under this identification, multiplication by $b\otimes c$ corresponds to left multiplication by $b$ and right multiplication by $c$. The diagonal action corresponds to the extension $\widetilde\sigma_g(be_\A c)=\sigma_g(b)e_\A\sigma_g(c)$. Observe that this is an identification of Hilbert spaces, not of the two algebras or their $L^1$-spaces. All positivity, multiplication and marginal maps in the following definition are taken in the abelian relative product.

\begin{definition}\label{weaklycptexten} A trace preserving action $G\curvearrowright^\sigma (\A \subset \B)$ on an extension of abelian von Neumann algebras is called a weakly compact extension if one can find a net of positive, norm-one,  elements $(\xi_n)_n \subset L^1(\B\bar\otimes_\A\B,\tau_\A)$ satisfying  

\begin{enumerate}
    \item[(i)] $
    \lim_n\| \sigma_{g}\otimes \sigma_g(\xi_n)-\xi_n\|_1=0$ for all $g\in G$;
    \item[(ii)] $\lim_n\| (a \otimes \bar a) \xi_n -\xi_n\|_1=0$ for all $a\in \mathscr U (\B)$;
    \item[(iii)] $\mathbb E_ {\B\otimes 1} (\xi_n)=\mathbb E_{1\otimes \B}(\xi_n)=1$ for all $n$.
\end{enumerate}    
\end{definition}

\noindent Next we need the following result, which is a relative version of \cite[Proposition 3.2]{OP07}. For the reader's convenience we include a proof which is very similar with \cite[Proposition 3.2]{OP07}, keeping track of the relative basic construction and the normalization of the approximating elements.

\begin{theorem}\label{coamenability1} Consider a trace-preserving action $G\curvearrowright^\sigma (\A \subset \B)$ on an extension of abelian von Neumann algebras. Then the following are equivalent: 
\begin{enumerate}
    \item $G\curvearrowright^\sigma (\A \subset \B)$ is weakly compact;
    \item There exists a state $\varphi\,:\,\langle \B, e_\A \rangle \to \mathbb C$ that is $\B$-central, $G$-invariant, and satisfies $\varphi_{|\B}=\tau$; 
    \item The inclusion of von Neumann algebras $\A\rtimes G\subset \B\rtimes G$ is co-amenable in the sense of Popa.
\end{enumerate}   
\end{theorem}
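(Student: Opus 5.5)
I will prove (1)$\Leftrightarrow$(2) and (2)$\Leftrightarrow$(3), following the scheme of \cite[Proposition 3.2]{OP07} and working through the identification $L^2(\B\bar\otimes_\A\B,\tau_\A)\cong L^2(\langle\B,e_\A\rangle,\text{Tr})$, $b\otimes c\mapsto be_\A c$. The basic dictionary is the following. A positive norm-one $\xi\in L^1(\B\bar\otimes_\A\B)$ gives a vector $\eta=\xi^{1/2}\in L^2(\B\bar\otimes_\A\B)$, which we view as a Hilbert–Schmidt type vector in $L^2(\langle\B,e_\A\rangle,\text{Tr})$. Conversely, a vector in $L^2(\langle \B,e_\A\rangle)$ gives a vector in $L^2$ of the abelian relative product, whose squared modulus is positive in $L^1$. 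Under this dictionary, multiplication by $a\otimes\bar a$ corresponds to $\eta\mapsto a\eta a^*$, and $\sigma_g\otimes\sigma_g$ corresponds to $\widetilde\sigma_g$. Since $\B\bar\otimes_\A\B$ is abelian, the Powers–Størmer inequality $\|\xi_1^{1/2}-\xi_2^{1/2}\|_2^2\le\|\xi_1-\xi_2\|_1\le \|\xi_1^{1/2}-\xi_2^{1/2}\|_2\|\xi_1^{1/2}+\xi_2^{1/2}\|_2$ converts the $L^1$ conditions (i), (ii) into $L^2$ almost-invariance and almost-centrality of $\eta_n$. Condition (iii) becomes $\langle x\eta_n,\eta_n\rangle=\tau(x)=\langle \eta_n x,\eta_n\rangle$ for $x\in\B$. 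Indeed, $\mathbb E_{\B\otimes1}(\xi_n)=1$ says $\int (x\otimes 1)\xi_n\, d\tau_\A=\tau(x)$, and likewise on the other side.

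For (1)$\Rightarrow$(2), define $\varphi_n(T)=\langle T\eta_n,\eta_n\rangle$ for $T\in\langle\B,e_\A\rangle$, where $T$ acts on $L^2(\langle \B,e_\A\rangle,\text{Tr})$ by left multiplication. Each $\varphi_n$ is a state by the normalization. Any weak$^*$ cluster point $\varphi$ then has the following properties. It restricts to $\tau$ on $\B$, by (iii). It is $\B$-central, because $|\varphi_n(uT)-\varphi_n(Tu)|\le 2\|T\|\,\|u^*\eta_n u-\eta_n\|_2\to0$ for $u\in\mathscr U(\B)$. It is $G$-invariant, by the analogous estimate with $\widetilde\sigma_g$, which is implemented by the unitary $u_g\,\cdot\,u_g^*$ on $L^2(\langle \B,e_\A\rangle)$.

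For (2)$\Rightarrow$(1) I run the standard Day/Namioka convexity trick. First, approximate $\varphi$ weak$^*$ by normal states $\text{Tr}(\zeta\,\cdot)$, where $\zeta\in L^1(\langle\B,e_\A\rangle,\text{Tr})_+$. Second, pass to convex combinations: the almost-centrality $\zeta u-u\zeta\to0$ and the almost-invariance $\widetilde\sigma_g(\zeta)-\zeta\to 0$ then hold in $L^1$-norm, over a finite set at a time, since the weak and norm closures of convex sets in $L^1$ coincide. Third, take square roots, $\eta=\zeta^{1/2}\in L^2(\langle\B,e_\A\rangle)$, and use Powers–Størmer in the semifinite algebra to get $\|u\eta u^*-\eta\|_2\to0$ and $\|\widetilde\sigma_g(\eta)-\eta\|_2\to0$. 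Fourth, transport $\eta$ to $L^2(\B\bar\otimes_\A\B)$ and set $\xi=|\eta|^2$.

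The main obstacle is the exact normalization (iii), since the procedure above yields only approximate marginals: $\mathbb E_{\B\otimes1}(\xi)$ is merely close to $1$ in $L^1$, because $\varphi|_\B=\tau$ forces $\text{Tr}(\zeta x)\approx\tau(x)$ for $x\in\B$, and similarly on the other side. I would first try to fix this by a Sinkhorn-type rescaling $\xi\mapsto (f\otimes 1)\xi(1\otimes h)$ with $f,h\in L^1(\B)_+$ close to $1$. Alternatively, I would use the convex, weak$^*$ compact set of states satisfying the marginal condition exactly and rerun the convexity argument inside it. I would also check that the rescaling preserves the approximate invariances, using that $f,h$ are almost $G$-invariant.

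For (2)$\Leftrightarrow$(3): by Popa's definition, co-amenability of $\A\rtimes G\subset\B\rtimes G$ means there is a $\B\rtimes G$-central state on $\langle\B\rtimes G,e_{\A\rtimes G}\rangle$ restricting to $\tau$ on $\B\rtimes G$. The Jones projection $e_{\A\rtimes G}$ restricted to $L^2(\B)$ is $e_\A$, and $\langle\B\rtimes G,e_{\A\rtimes G}\rangle\cong\langle\B,e_\A\rangle\rtimes G$ via the diagonal action $\widetilde\sigma$. Given $\varphi$ as in (2), compose it with the canonical conditional expectation $\langle\B,e_\A\rangle\rtimes G\to\langle\B,e_\A\rangle$; this state is central for both $\B$ and the $u_g$, hence for $\B\rtimes G$. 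Conversely, restrict a hypertrace to $\langle\B,e_\A\rangle$; centrality with respect to $u_g$ gives $G$-invariance.
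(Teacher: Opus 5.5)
Your proofs of (1)$\Rightarrow$(2) and (2)$\Leftrightarrow$(3) are the same as the paper's. In (2)$\Rightarrow$(3) you should still add the routine step extending centrality from $\B$ and the $u_g$'s to all of $\B\rtimes G$, e.g.\ via $|\widetilde\varphi(xT)-\widetilde\varphi(Tx)|\le 2\|T\|\,\|x\|_2$, which uses $\widetilde\varphi|_{\B\rtimes G}=\tau$.

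\textbf{The gap.} It is in (2)$\Rightarrow$(1), at exactly the point you flag: you never produce elements satisfying (iii) exactly, and neither proposed repair works as stated.

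\emph{Sinkhorn rescaling.} An exact rescaling $(f\otimes h)\xi$ need not exist in this generality. Take $\A=\C$ and $\B=L^\infty[0,1]$, and let $\xi$ be a density supported in a thin neighbourhood of the graph of a homeomorphism compressing $[0,\varepsilon]$ into $[0,\varepsilon/10]$. Its marginals are $L^1$-close to $1$, yet no rescaling makes both of them equal to $1$. Even when a scaling exists, you have no control on $f,h$ that would keep the approximate invariances.

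\emph{Option (b).} This presupposes that normal states $\text{Tr}(S\,\cdot)$ with $\text{Tr}(Sb)=\tau(b)$ exactly are weak$^*$-dense among states restricting to $\tau$ on $\B$. That density is precisely what needs proof.

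\textbf{What is missing.} The paper uses two observations.

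First, only one marginal has to be corrected. If $S\ge 0$ in $L^1(\langle\B,e_\A\rangle,\text{Tr})$ and $\zeta=S^{1/2}$, then $\zeta=\zeta^*$. So for the corresponding $\xi\in L^2(\B\bar\otimes_\A\B)$, both $\tau_\A((b\otimes1)|\xi|^2)=\langle b\zeta,\zeta\rangle$ and $\tau_\A((1\otimes b)|\xi|^2)=\langle \zeta b,\zeta\rangle$ equal $\text{Tr}(Sb)$. Hence it suffices to arrange $\text{Tr}(Sb)=\tau(b)$ for all $b\in\B$.

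Second, this is achieved by truncating and then refilling the deficit with the Jones projection. Write $\text{Tr}(T_nb)=\tau(r_nb)$, and include $r_n-1$ in your Hahn--Banach/convex-combination step so that $\|r_n-1\|_1\to0$. Set $c_n=\max\{1,r_n\}^{-1/2}$, $d_n=(1-r_n)_+$ and $S_n=c_nT_nc_n+d_n^{1/2}e_\A d_n^{1/2}$. Since $\B$ is abelian and $\text{Tr}(xe_\A y)=\tau(xy)$, you get $\text{Tr}(S_nb)=\tau((c_n^2r_n+d_n)b)=\tau(b)$ exactly. Moreover $\|S_n-T_n\|_1\le 2\|r_n-1\|_1^{1/2}+\|r_n-1\|_1\to0$, so the approximate $G$-invariance and $\B$-centrality survive. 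From there your square-root and Powers--St\o rmer steps go through unchanged.
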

\begin{proof}
    Let $\mathcal N=\langle\B,e_\A\rangle$ and let $\mathcal H=L^2(\B\bar\otimes_\A\B,\tau_\A)\cong L^2(\mathcal N,\text{Tr})$, via the identification described above.

    \noindent (1)$\Rightarrow$ (2) Fix a net $(\eta_n)_n\subset L^1(\B\bar\otimes_\A\B,\tau_\A)$ satisfying conditions (i)--(iii) of Definition \ref{weaklycptexten} and let $\xi_n=\eta_n^{1/2}\in\mathcal H$ for each $n$, where the square root is taken in the abelian relative product. For every $g\in G$ and $a\in\mathscr U(\B)$, we have
    \begin{align*}
        \| (\sigma_g\otimes\sigma_g)(\xi_n)-\xi_n\|_2^2
        &\leq\| (\sigma_g\otimes\sigma_g)(\eta_n)-\eta_n\|_1\text{ and }
        \|(a\otimes\bar a)\xi_n-\xi_n\|_2^2
    \leq 2\|(a\otimes\bar a)\eta_n-\eta_n\|_1.
    \end{align*}
    The first inequality follows from the inequality for square roots of nonnegative functions, and the second follows from $|z-1|^2\leq2|z-1|$ for $|z|=1$. For every $n$ consider the  state $\varphi_n:\mathcal N\rightarrow\mathbb C$ by $\varphi_n(x)=\langle x\xi_n,\xi_n\rangle_{\mathcal H}$. Using the Cauchy--Schwarz inequality, for every $x\in\mathcal N$, $a\in\mathscr U(\B)$ and $g\in G$, we have 
    \begin{align*}
        |\varphi_n(axa^*)-\varphi_n(x)|
        &\leq2\|x\|_\infty\|(a^*\otimes a)\xi_n-\xi_n\|_2, \text{ and}\\
        |\varphi_n(\widetilde\sigma_g(x))-\varphi_n(x)|
        &\leq2\|x\|_\infty\|(\sigma_{g^{-1}}\otimes\sigma_{g^{-1}})(\xi_n)-\xi_n\|_2.
    \end{align*}
    Thus any weak-$*$ cluster point $\varphi$ of $(\varphi_n)_n$ is $\B$-central and $G$-invariant. Moreover, (iii) implies that for any $b\in\B$, we have 
 $\varphi_n(b)=\tau_\A((b\otimes1)\eta_n)
        =\tau\bigl(b\mathbb E_{\B\otimes1}(\eta_n)\bigr)=\tau(b)$;  hence $\varphi|_\B=\tau$.

    \bigskip

    \noindent (2) $\Rightarrow$ (1) Let $\varphi$ be a state satisfying (2). As normal states are weak-$*$ dense in the space of states on $\mathcal N$, one can find a net of elements $T_n\in L^1(\mathcal N,\text{Tr})_+$ with $\text{Tr}(T_n)=1$ so that for every $x\in \mathcal N$ we have 
    \begin{equation*}
        \lim_n\text{Tr}(T_nx)=\varphi(x).
    \end{equation*}
    Let $r_n\in L^1(\B)_+$ be the density of the restriction to $\B$, so that for every $b\in \B$ we have 
    \begin{equation*}
        \text{Tr}(T_nb)=\tau(r_nb).
    \end{equation*}
    The assumptions on $\varphi$ imply that $r_n-1$, $\widetilde\sigma_g(T_n)-T_n$ and $aT_na^*-T_n$ converge weakly to zero in their respective $L^1$-spaces, for every $g\in G$ and $a\in\mathscr U(\B)$. Applying Hahn--Banach separation to finite tuples of these differences and taking convex combinations, we can assume for all $g\in G$ and $a\in \mathscr U(\B)$ we have 
    \begin{align*}
        \lim_n\|r_n-1\|_1&=0,\\
        \lim_n\|\widetilde\sigma_g(T_n)-T_n\|_1&=0,\\
        \lim_n\|aT_na^*-T_n\|_1&=0.
    \end{align*}
    Here and below, nets may be reindexed after taking convex combinations.
\vskip 0.05in
   \noindent To make the restriction to $\B$ exactly equal to $\tau$ we have to do a truncation.  So consider the following elements:
 $c_n=\max\{1,r_n\}^{-1/2}$, $d_n=(1-r_n)_+$, and $S_n=c_nT_nc_n+d_n^{1/2}e_\A d_n^{1/2}$. Then  $c_n,d_n\in \B$, and for every $b\in\B$, we have 
  ${\rm Tr}(S_nb)=\tau\bigl((c_n^2r_n+d_n)b\bigr)=\tau(b)$.
   Moreover, using the Cauchy--Schwarz inequality (in  noncommutative $L^p$-spaces), we have 
    \begin{align*}
        \|T_n-c_nT_nc_n\|_1
        \leq2\|T_n^{1/2}\|_2\|(1-c_n)T_n^{1/2}\|_2
        =2\tau\bigl((1-c_n)^2r_n\bigr)^{1/2}
        \leq2\|r_n-1\|_1^{1/2}.
    \end{align*}
    Since $\|d_n^{1/2}e_\A d_n^{1/2}\|_1=\tau(d_n)\leq\|r_n-1\|_1$, we obtain
    \begin{equation*}
        \|S_n-T_n\|_1\leq2\|r_n-1\|_1^{1/2}+\|r_n-1\|_1\to0.
    \end{equation*}
    Consequently, $\text{Tr}(S_n)=1$ and both approximate invariance properties above remain true with $S_n$ in place of $T_n$.

    \noindent Let $\zeta_n=S_n^{1/2}\in L^2(\mathcal N,\text{Tr})$. By Powers--St\o rmer inequality, for every $g\in G$ and $a\in\mathscr U(\B)$, we have
    \begin{align*}
        \|\widetilde\sigma_g(\zeta_n)-\zeta_n\|_2^2
        &\leq\|\widetilde\sigma_g(S_n)-S_n\|_1\to0, \text{ and}\\
        \|a\zeta_na^*-\zeta_n\|_2^2
        &\leq\|aS_na^*-S_n\|_1\to0.
    \end{align*}
     Denote by $\xi_n\in\mathcal H$ the vector corresponding to $\zeta_n$ and set $\eta_n=|\xi_n|^2\in L^1(\B\bar\otimes_\A\B,\tau_\A)_+$, taking the absolute value and the square in the abelian relative product. Since $\|\xi_n\|_2=1$, the Cauchy--Schwarz inequality yields
    \begin{align*}
        \|(\sigma_g\otimes\sigma_g)(\eta_n)-\eta_n\|_1
        &\leq2\|(\sigma_g\otimes\sigma_g)(\xi_n)-\xi_n\|_2\to0,\\
        \|(a\otimes\bar a)\eta_n-\eta_n\|_1
        &\leq\|(a\otimes\bar a)\xi_n-\xi_n\|_2\to0.
    \end{align*}
    Finally, the left and right actions under the Hilbert space identification, together with $\zeta_n=\zeta_n^*$, give for every $b\in\B$, that
    \begin{align*}
        \tau_\A((b\otimes1)\eta_n)
        &=\langle b\zeta_n,\zeta_n\rangle=\text{Tr}(S_nb)=\tau(b),\\
        \tau_\A((1\otimes b)\eta_n)
        &=\langle\zeta_nb,\zeta_n\rangle=\text{Tr}(S_nb)=\tau(b).
    \end{align*}
    Thus $\mathbb E_{\B\otimes1}(\eta_n)=\mathbb E_{1\otimes\B}(\eta_n)=1$. Altogether, these show $(\eta_n)_n$ satisfies (i)--(iii) in Definition \ref{weaklycptexten}, as desired.

    \bigskip

    \noindent (2) $\Rightarrow$ (3) Under the usual identification $L^2(\B\rtimes G)\cong L^2(\B)\otimes\ell^2(G)$ via the map $\widehat{bu_g}\mapsto\hat b\otimes\delta_g$ for $b\in\B$, $g\in G$, observe that $e_{\A\rtimes G}$ is mapped to $e_\A\otimes1_{\ell^2(G)}$. Let $U_g\hat b=\widehat{\sigma_g(b)}$. Consequently, on the Hilbert space $L^2(\B)\otimes\ell^2(G)$, we have the following identifications
    \begin{align*}
        \B\rtimes G&=
        \bigl(\B\otimes1_{\ell^2(G)}\cup\{U_g\otimes\lambda(g)\,|\,g\in G\}\bigr)'',\\
        \langle\B,e_\A\rangle\otimes1_{\ell^2(G)}
        &=\bigl(\B\otimes1_{\ell^2(G)}\cup\{e_\A\otimes1_{\ell^2(G)}\}\bigr)'',\\
        \langle\B\rtimes G,e_{\A\rtimes G}\rangle
        &=\bigl(\B\otimes1_{\ell^2(G)}\cup\{U_g\otimes\lambda(g)\,|\,g\in G\}
        \cup\{e_\A\otimes1_{\ell^2(G)}\}\bigr)''.
    \end{align*}
    Since the action $G\curvearrowright^\sigma\B$ preserves $\A$, the unitaries $u_g=U_g\otimes\lambda(g)$ fix $e_\A\otimes1_{\ell^2(G)}$ by conjugation and implement the action $\widetilde\sigma$ on $\mathcal N$. This shows that
    \begin{equation*}
        \langle\B\rtimes G,e_{\A\rtimes G}\rangle\cong\mathcal N\rtimes_{\widetilde\sigma}G.
    \end{equation*}
    Let $\mathbb E_{\mathcal N}:\mathcal N\rtimes_{\widetilde\sigma}G\to\mathcal N$ be the canonical conditional expectation and let $\widetilde\varphi=\varphi\circ\mathbb E_{\mathcal N}$. For every $T\in\mathcal N\rtimes_{\widetilde\sigma}G$, $b\in\B$ and $g\in G$, we have
    \begin{align*}
        \widetilde\varphi(bT)
        &=\varphi(b\mathbb E_{\mathcal N}(T))
        =\varphi(\mathbb E_{\mathcal N}(T)b)=\widetilde\varphi(Tb), \text{ and}\\
        \widetilde\varphi(u_gTu_g^*)
        &=\varphi(\widetilde\sigma_g(\mathbb E_{\mathcal N}(T)))
        =\varphi(\mathbb E_{\mathcal N}(T))=\widetilde\varphi(T).
    \end{align*}
    Also, ${\mathbb E_{\mathcal N}}_{|\B\rtimes G}=\mathbb E_\B$, and hence $\widetilde\varphi|_{\B\rtimes G}=\tau$. The displayed identities give centrality on the algebraic crossed product. Since for every $x\in\B\rtimes G$ we have $|\widetilde\varphi(xT)-\widetilde\varphi(Tx)|\leq2\|T\|_\infty\|x\|_2$, then the Cauchy--Schwarz inequality and $\|\cdot\|_2$-density extend centrality to $\B\rtimes G$. Thus $\A\rtimes G\subset\B\rtimes G$ is co-amenable.

    \bigskip

    \noindent (3) $\Rightarrow$ (2) Suppose $\tilde\varphi$ is a $\B\rtimes G$-central state on $\langle\B\rtimes G,e_{\A\rtimes G}\rangle$ so that $\tilde\varphi|_{\B\rtimes G}=\tau$. Under the preceding identification, its restriction to $\langle\B,e_\A\rangle$ satisfies the hypotheses of (2).
\end{proof}

\begin{corollary}\label{compactcoamenable} Assume the trace preserving action $G\curvearrowright^\sigma (\A \subset \B)$ on an extension of abelian von Neumann algebras is compact. Then the extension is weakly compact and the inclusion of von Neumann algebras $\A\rtimes G\subseteq\B\rtimes G$ is co-amenable.
\end{corollary}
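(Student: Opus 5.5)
By Theorem \ref{coamenability1}, it suffices to produce a state $\varphi$ on $\mathcal N=\langle \B,e_\A\rangle$ that is $\B$-central, $G$-invariant under $\widetilde\sigma$, and restricts to $\tau$ on $\B$. Both conclusions then follow at once: weak compactness is (2)$\Rightarrow$(1), and co-amenability of $\A\rtimes G\subseteq\B\rtimes G$ is (2)$\Rightarrow$(3). So the plan is to build such a state from the compactness data of Definition \ref{compactext}.

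\textbf{Step 1: a $G$-invariant, $\B$-central positive element.} Since the extension is compact, $L^2(\B)$ is the closed span of finitely generated $G$-invariant $\A$-submodules. Equivalently, I would use that $L^2(\B)$ decomposes into $G$-invariant, $\A$-modules $\mathcal K_j$ with finite $\A$-dimension; in the measurable picture these are finite-rank bundles over $Y$. Let $p_j\in\mathcal N=\A'\cap B(L^2(\B))$ be the orthogonal projection onto $\mathcal K_j$. Each $p_j$ has the following properties:
\begin{itemize}
\item it commutes with $\A$ (right module) and with the $G$-action, so it is $\widetilde\sigma$-invariant;
\item it has finite trace, $\text{Tr}(p_j)=\dim_\A\mathcal K_j<\infty$, at least after cutting by projections in $\A$ to make the local dimension bounded.
\end{itemize}
Since $\B$ is abelian, the relevant centrality is asymptotic: $\|bp_jb^*-p_j\|$ is small in a suitable sense.

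\textbf{Step 2: the state.} Choose $\mathcal K$ a finite sum of the $\mathcal K_j$ containing approximations of given finitely many unitaries $a\in\mathscr U(\B)$. Define a normal state $\varphi_n(x)=\text{Tr}(p x p)/\text{Tr}(p)$, or better use $T_n=\sum_i \xi_i e_\A\xi_i^*$ normalized, built from an $\A$-orthonormal basis $\{\xi_i\}$ of a large invariant finitely generated submodule. This is the natural candidate. $G$-invariance is then exact up to the basis change, which is given by a unitary matrix over $\A$; hence $\widetilde\sigma_g(T_n)=T_n$, using Definition \ref{compactext}(2) to control the approximation. $\B$-centrality $\|aT_n-T_na\|_1\to0$ follows because $aL^2$ is almost contained in the submodule.

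The restriction to $\B$ equals $\tau$ only approximately. I would fix this with exactly the truncation $S_n=c_nT_nc_n+d_n^{1/2}e_\A d_n^{1/2}$ from the proof of (2)$\Rightarrow$(1), or simply pass to a weak-$*$ cluster point and then invoke Theorem \ref{coamenability1}.

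\textbf{Main obstacle.} The hardest part is the asymptotic $\B$-centrality: showing that $\|aT_n-T_na\|_1/\text{Tr}(T_n)\to 0$ despite the dimension of the submodule growing. This is the analogue of the F{\o}lner-type property in \cite[Proposition 3.2]{OP07} for compact actions, where one uses finite-dimensional invariant subspaces whose projections almost commute with multiplication operators, in normalized Hilbert--Schmidt norm. I would first try the standard trick: replace $\varphi_n$ by averages over an increasing exhausting family of invariant submodules, Ces\`aro-style. I would then use that multiplication by $a$ maps $\mathcal K$ into a submodule of comparable dimension, by the module version of the fact that products of almost periodic functions are almost periodic, so that the normalized boundary term vanishes.
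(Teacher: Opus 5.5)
Reducing to a state as in Theorem \ref{coamenability1}(2) is fine. The gap is that the two things you defer are the whole content of the corollary, and neither is established.

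\textbf{Centrality.} A single projection $P_{\mathcal K}$ onto a $G$-invariant submodule is far from $\B$-central. For $\mathcal K=L^2(\A)$ one has $P_{\mathcal K}=e_\A$ and $\|be_\A-e_\A b\|_{2,\text{Tr}}^2=2\|b-\mathbb E_\A(b)\|_2^2$. So your argument needs $\|aP_{\mathcal K}a^*-P_{\mathcal K}\|_1/\text{Tr}(P_{\mathcal K})\to0$ along a well-chosen family of invariant submodules, which is a F{\o}lner property for how invariant submodules multiply. Your heuristic for it is not correct. If $a$ lies in an invariant module of rank $m$, almost periodicity only puts $a\mathcal K$ in an invariant module of rank up to $m\cdot\operatorname{rank}(\mathcal K)$, not one of comparable rank. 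Likewise, requiring that $\mathcal K$ contain approximations of the given unitaries is not enough. Take the trivial action on $L^\infty(\mathbb T)$ over $\mathbb C1$ with the Fourier decomposition, and $\mathcal K=\overline{\text{span}}\{z^n : n\in F\}$ with $F=\{-N,\dots,N\}\cup S$, where $S$ is a sparse set of $2N+1$ integers. Then $\|zP_{\mathcal K}z^*-P_{\mathcal K}\|_1/\text{Tr}(P_{\mathcal K})=|F\triangle(F+1)|/|F|\geq1$. For a dense subgroup of a compact group $K$ acting by translations, the needed statement already amounts to a F{\o}lner property of the fusion rules of $\widehat K$. In the relative setting over $\A$ you give no argument for it.

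\textbf{Normalization.} The density of $b\mapsto\text{Tr}(P_{\mathcal K}b)/\text{Tr}(P_{\mathcal K})$ is $\sum_i|\xi_i|^2/\text{Tr}(P_{\mathcal K})$. This is $G$-invariant, hence constant if $G\curvearrowright\B$ is ergodic, but ergodicity is not assumed. Otherwise it need not be close to $1$: take the trivial action on $L^\infty[0,1]$ and let $\mathcal K$ be the step functions on a partition much finer on $[0,\frac12]$ than on $[\frac12,1]$. The truncation $S_n$ from (2)$\Rightarrow$(1) presupposes $\|r_n-1\|_1\to0$, and a weak-$*$ cluster point will in general not restrict to $\tau$.

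\textbf{How the paper avoids both problems.} It reverses the roles of invariance and centrality.
\begin{itemize}
\item From Definition \ref{compactext} it extracts a finite partition $p_1,\dots,p_k\in\B$ such that, with $\mathcal C=(\A\cup\{p_j\})''$, one has $\sup_{g\in G}\|\sigma_g(a)-\mathbb E_{\mathcal C}(\sigma_g(a))\|_2<\varepsilon$ for every $a$ in a given finite $\mathcal E\subset\mathscr U(\B)$. The uniformity in $g$ is exactly what the uniformly bounded $\A$-coefficients provide.
\item It sets $\eta_0=\sum_j(h_j^{-1}\otimes1)(p_j\otimes p_j)$ with $h_j=\mathbb E_\A(p_j)$. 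Both marginals of $\eta_0$ are exactly $1$, so no normalization issue arises.
\item It shows $\|(b\otimes1-1\otimes b)\eta_0^{1/2}\|_2^2=2\|b-\mathbb E_{\mathcal C}(b)\|_2^2$. Hence every translate $(\sigma_g\otimes\sigma_g)(\eta_0)$ is $2^{1/2}\varepsilon$-almost $\mathcal E$-central in $\|\cdot\|_1$.
\item It obtains exact $G$-invariance by taking the minimal-norm point of the $L^2$-closed convex hull of the orbit. Positivity, the marginal identities and the centrality estimate all survive, since they are closed convex conditions.
\end{itemize}
No invariant submodules, F{\o}lner sets or ergodicity are needed. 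This gives Definition \ref{weaklycptexten} directly, and co-amenability then follows from Theorem \ref{coamenability1}.
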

\begin{proof}
    Fix a finite set $\mathcal E\subset\mathscr U(\B)$ and $\varepsilon>0$. By Definition \ref{compactext}, there exist a finite partition of unity by projections $p_1,\dots,p_k\in\B$ and $\mathcal C=(\A\cup\{p_1,\dots,p_k\})''$ such that
    \begin{equation}\label{eq:firstestimate}
        \sup_{a\in\mathcal E,\,g\in G}\|\sigma_g(a)-\mathbb E_{\mathcal C}(\sigma_g(a))\|_2<\varepsilon.
    \end{equation}
    Indeed, first approximate the elements of $\mathcal E$ by the linear span of the set $\mathcal F$ in Definition \ref{compactext}, and then approximate the finitely many vectors $\xi_i$ occurring there by simple functions on a common finite partition. The uniform bounds on the coefficients in $\A$ give the displayed estimate.
\vskip 0.05in
    \noindent Denote by $h_j:=\mathbb E_\A(p_j)$ and, in the abelian relative product, consider $\eta_0:=\sum_{j=1}^k(h_j^{-1}\otimes1)(p_j\otimes p_j)$; here the inverse  $h_j^{-1}$ is taken on the support $s_j={\rm supp}(h_j)\in \A$  and it is zero on its complement. Then $\eta_0\geq0$. Moreover, using the formula $\mathbb E_{\B\otimes 1}(x\otimes y)= x \mathbb E_\A(y)\otimes 1$  we see that  
    \begin{align*}
    \mathbb E_ {\B\otimes 1}(\eta_0)= \sum_j  h_j^{-1} p_j \mathbb E_{\A}(p_j) \otimes 1= \sum_j s_j p_j\otimes 1=\sum_j p_j\otimes 1 =1. 
        \end{align*}
Similarly $\mathbb E_{1\otimes \B}(\eta_0)=1$. Also using orthogonality and similar computations we can see that 
    \begin{align*}
        \|\eta_0\|_2^2=  \sum_{j=1}^k
        \tau_\A ((  h_j^{-2} p_j)\otimes p_j)= \sum_{j=1}^k
        \tau_\A ((  h_j^{-2} p_j)\otimes p_j)= \sum^k_{j=1} = \tau( h_j^{-2}\mathbb E_\A(p_j) \mathbb E_\A(p_j))=\sum^k_{j=1}\tau(s_j)\leq k.
    \end{align*}
    For every $b\in \B$, using the relative-product trace
$\tau_\A(x\otimes y)=
\tau(\mathbb E_\A(x)\mathbb E_\A(y))$, we obtain
\begin{equation}\begin{split}\label{eq:secondidentity}
    \|(b\otimes1-1\otimes b)\eta_0^{1/2}\|_2^2&=
    \tau_\A\left(
        \bigl(|b|^2\otimes1+1\otimes|b|^2
        -b^*\otimes b-b\otimes b^*\bigr)\eta_0
    \right)\\
    &\quad=
    \sum_{j=1}^k\tau_\A\Bigl(
        (h_j^{-1}\otimes1)
        \bigl(|b|^2p_j\otimes p_j
        +p_j\otimes|b|^2p_j-b^*p_j\otimes bp_j
        -bp_j\otimes b^*p_j\bigr)
    \Bigr)\\
    &\quad=
    \sum_{j=1}^k\tau\left(
        h_j^{-1}
        \bigl(\mathbb E_\A(|b|^2p_j) h_j+h_j\mathbb E_\A(|b|^2p_j) - \mathbb E_\A(b^*p_j)\mathbb E_\A(bp_j)-\mathbb E_\A(bp_j)\mathbb E_\A(b^*p_j)\bigr)
    \right)\\
    &\quad=
    2\sum_{j=1}^k\tau(\mathbb E_\A(|b|^2p_j))
    -2\sum_{j=1}^k\tau(h_j^{-1}|\mathbb E_\A(bp_j)|^2)\\
    &\quad=
    2\tau(|b|^2)-2\|\mathbb E_{\mathcal C}(b)\|_2^2\\
    &\quad=
    2\|b-\mathbb E_{\mathcal C}(b)\|_2^2.
\end{split}\end{equation}
The trace formulas involving negative powers of $h_j$ follow
by truncation. The last equality follows because
$\mathbb E_{\mathcal C}\,:\,L^2(\B) \to L^2(\mathcal C)$ is the orthogonal projection.
\vskip 0.05in

\noindent Fix $a\in\mathcal E\subset\mathscr U(\B)$ and $g\in G$. Then, using the invariance of $\tau_\A$ under the diagonal action, the Cauchy--Schwarz inequality, \eqref{eq:secondidentity}, and \eqref{eq:firstestimate} we obtain
\begin{align*}
    \|(a\otimes\bar a-1)(\sigma_g\otimes\sigma_g)(\eta_0)\|_1
    &\quad=\|(\sigma_{g^{-1}}(a)\otimes\overline {\sigma_{g^{-1}}(a)}-1)\eta_0\|_1\\
    &\quad=\|(\sigma_{g^{-1}}(a)\otimes1-1\otimes \sigma_{g^{-1}}(a))\eta_0\|_1\\
    &\quad=\|\bigl(\sigma_{g^{-1}}(a)\otimes1-1\otimes \sigma_{g^{-1}}(a))\eta_0^{1/2}\bigr)
        \eta_0^{1/2}\|_1\\
    &\quad\leq\|(\sigma_{g^{-1}}(a)\otimes1-1\otimes \sigma_{g^{-1}}(a))\eta_0^{1/2}\|_2
        \|\eta_0^{1/2}\|_2\\
    &\quad=2^{1/2}\|\sigma_{g^{-1}}(a)
        -\mathbb E_{\mathcal C}(\sigma_{g^{-1}}(a))\|_2\\
    &\quad<2^{1/2}\varepsilon.
\end{align*}

    \noindent Let $\eta$ be the unique element of minimal $L^2$-norm in the $L^2$-closed convex hull of $\{(\sigma_g\otimes\sigma_g)(\eta_0)\,|\,g\in G\}$. Uniqueness implies that $\eta$ is $G$-invariant. Positivity, both marginal identities and the last $L^1$-estimate pass to this closed convex hull. Consequently $\eta\geq0$, $\|\eta\|_1=1$, both marginals are $1$, and $\|(a\otimes\bar a)\eta-\eta\|_1\leq2^{1/2}\varepsilon$ for every $a\in\mathcal E$. Directing the pairs $(\mathcal E,\varepsilon)$ by inclusion of $\mathcal E$ and decrease of $\varepsilon$ gives a net satisfying Definition \ref{weaklycptexten}. We leave the details to the reader.  The co-amenability conclusion now follows from Theorem \ref{coamenability1}.
\end{proof}

\begin{theorem}\label{coamenabilityfinitechain} Let $\A_1\subseteq\cdots\subseteq\A_n$ be abelian von Neumann algebras and let $G\curvearrowright^\sigma\A_n$ be a trace preserving action leaving each $\A_i$ invariant. Assume the extension $G\curvearrowright^\sigma(\A_i\subset\A_{i+1})$ is weakly compact for all $1\leq i<n$. Then so is $G\curvearrowright^\sigma(\A_1\subset\A_n)$.
\end{theorem}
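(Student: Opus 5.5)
The plan is to pass through Theorem \ref{coamenabilityfinitechain}'s equivalent formulation in terms of co-amenability given by Theorem \ref{coamenability1}. By induction on $n$ it suffices to treat $n=3$: if $\A_1\subset\A_2$ and $\A_2\subset\A_3$ are weakly compact extensions, then so is $\A_1\subset\A_3$. By Theorem \ref{coamenability1} ((1)$\Leftrightarrow$(3)), the hypotheses say that the inclusions $\A_1\rtimes G\subseteq\A_2\rtimes G$ and $\A_2\rtimes G\subseteq \A_3\rtimes G$ are co-amenable in the sense of Popa. The conclusion amounts to the co-amenability of $\A_1\rtimes G\subseteq\A_3\rtimes G$, after which (3)$\Rightarrow$(1) of Theorem \ref{coamenability1} returns weak compactness.

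The key step is therefore transitivity of co-amenability for inclusions $P\subseteq Q\subseteq M$ of tracial von Neumann algebras, here with $P=\A_1\rtimes G$, $Q=\A_2\rtimes G$, $M=\A_3\rtimes G$. This is a known result of Popa, and I would simply cite it. If a self-contained argument is wanted, I would use the characterization of co-amenability of $P\subseteq M$ as weak containment of the $M$-bimodule $L^2(M)$ in $L^2(\langle M,e_P\rangle)\cong L^2(M)\otimes_P L^2(M)$. There is a natural isomorphism of $M$-bimodules
\[
L^2(M)\otimes_P L^2(M)\cong L^2(M)\otimes_Q \bigl(L^2(Q)\otimes_P L^2(Q)\bigr)\otimes_Q L^2(M).
\]
The argument then runs in three steps.
\begin{enumerate}
\item Co-amenability of $P\subseteq Q$ gives that $L^2(Q)$ is weakly contained in $L^2(Q)\otimes_P L^2(Q)$ as $Q$-bimodules.
\item Induction $L^2(M)\otimes_Q(\cdot)\otimes_Q L^2(M)$ preserves weak containment. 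Hence $L^2(M)\otimes_Q L^2(M)$ is weakly contained in $L^2(M)\otimes_P L^2(M)$.
\item Co-amenability of $Q\subseteq M$ gives that $L^2(M)$ is weakly contained in $L^2(M)\otimes_Q L^2(M)$.
\end{enumerate}
Transitivity of weak containment then finishes the argument.

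The main obstacle is the verification that induction of bimodules preserves weak containment, since relative tensor products are only defined via bounded vectors. I would handle it with the standard approach: approximate coefficients by $Q$-bounded vectors and use the fact that coefficient functions of the induced bimodule at vectors of the form $x\otimes\xi\otimes y$ are controlled by those of $\xi$. Alternatively, one can work directly with states. Let $\varphi_1$ be the hypertrace on $\langle Q,e_P\rangle$ and $\varphi_2$ the hypertrace on $\langle M,e_Q\rangle$. Using $\langle M,e_P\rangle\supseteq\langle M,e_Q\rangle$, one composes $\varphi_1$ with an ($M$-equivariant, via $Q$-centrality) ``conditional expectation'' realized by averaging to produce an $M$-central state on $\langle M,e_P\rangle$ restricting to $\tau$ on $M$. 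The bimodule route seems cleaner, so I would follow it.
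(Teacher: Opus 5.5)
Your proposal is correct and follows essentially the same route as the paper: convert the hypotheses into co-amenability of $\A_i\rtimes G\subseteq\A_{i+1}\rtimes G$ via Theorem \ref{coamenability1}, apply transitivity of relative amenability (by induction along the chain), and convert back. The only difference is cosmetic. The paper first compresses $\langle \M_n, e_{\M_i}\rangle$ by $e_{\M_{i+1}}$ to see that $\M_{i+1}$ is amenable relative to $\M_i$ inside the common ambient algebra $\M_n$, and then cites \cite[Proposition 2.4(3)]{OP07}. Your Connes-fusion and weak-containment argument instead treats the chain $P\subseteq Q\subseteq M$ directly.
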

\begin{proof} Denote by $\M_i=\mathcal A_i\rtimes G$ for all $1\leq i\leq n$. From the assumptions and Theorem \ref{coamenability1} we have that $\M_{i+1}$ is amenable relative to $\M_i$ inside $\M_{i+1}$. This also holds inside $\M_n$: compress the basic construction $\langle\M_n,e_{\M_i}\rangle$ by $e_{\M_{i+1}}$ and compose with an $\M_{i+1}$-central state on $\langle\M_{i+1},e_{\M_i}\rangle$. Its restriction to $\M_n$ is $\tau\circ\mathbb E_{\M_{i+1}}=\tau$. Using iteratively \cite[Proposition 2.4(3)]{OP07} we get that $\M_n$ is amenable relative to $\M_1$ inside $\M_n$. Then Theorem \ref{coamenability1} above yields the conclusion.
\end{proof}

\begin{theorem}\label{weaklycompactlimit} Let $\mathcal A_0\subseteq\mathcal A_1\subseteq\mathcal A_2\subseteq\cdots$ be abelian von Neumann algebras and put $\A=\overline{\bigcup_m\mathcal A_m}^{\rm WOT}$. Let $G\curvearrowright^\sigma\A$ be a trace preserving action leaving each $\A_n$ invariant. Assume the extension $G\curvearrowright^\sigma(\A_0\subset\A_n)$ is weakly compact for every $n\geq1$. Then so is $G\curvearrowright^\sigma(\A_0\subset\A)$. In particular, if every consecutive extension $G\curvearrowright^\sigma(\A_n\subset\A_{n+1})$, $n\geq1$, is weakly compact, then $G\curvearrowright^\sigma(\A_1\subset\A)$ is weakly compact.
\end{theorem}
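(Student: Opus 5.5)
The plan is to use the state characterization (2) of Theorem \ref{coamenability1} and pass to a limit of states. For each $n\geq1$, weak compactness of $G\curvearrowright(\A_0\subset\A_n)$ gives, by Theorem \ref{coamenability1}, a state $\varphi_n$ on $\mathcal N_n:=\langle\A_n,e_{\A_0}\rangle$. This state is $\A_n$-central, $G$-invariant, and satisfies $\varphi_n|_{\A_n}=\tau$. The goal is to produce a single state $\varphi$ on $\mathcal N:=\langle\A,e_{\A_0}\rangle$ with the analogous properties relative to $\A$. Theorem \ref{coamenability1} will then give weak compactness of $G\curvearrowright(\A_0\subset\A)$.

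\textbf{Step 1: transport $\varphi_n$ to $\mathcal N$.} Let $p_n\in\mathcal N$ be the orthogonal projection of $L^2(\A)$ onto $L^2(\A_n)$, i.e.\ $p_n=e_{\A_n}$. Since $\A_0\subset\A_n$, $p_n$ commutes with $e_{\A_0}$ and with $\A_n$. Compression gives $p_n\mathcal N p_n=\mathcal N_n p_n\cong\mathcal N_n$, exactly as in the compression argument in the proof of Theorem \ref{coamenabilityfinitechain}. Define $\psi_n(x):=\varphi_n(p_nxp_n)$ for $x\in\mathcal N$. This is a state. It is $G$-invariant because $p_n$ is $\widetilde\sigma$-invariant ($\A_n$ is $G$-invariant) and compression is equivariant. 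It is $\A_n$-central because $p_n$ commutes with $\A_n$. For $b\in\A$ we have $p_nbp_n=\mathbb E_{\A_n}(b)p_n$, so $\psi_n(b)=\tau(\mathbb E_{\A_n}(b))=\tau(b)$. Thus $\psi_n|_\A=\tau$.

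\textbf{Step 2: take a limit.} Let $\varphi$ be a weak-$*$ cluster point of $(\psi_n)_n$. Then $\varphi$ is a $G$-invariant state with $\varphi|_\A=\tau$, and it is $a$-central for every $a\in\bigcup_m\A_m$, since $\psi_n$ is $\A_m$-central for all $n\geq m$.

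\textbf{Step 3: extend centrality to $\A$ (the main obstacle).} The delicate point is that $\bigcup_m\A_m$ is only WOT-dense in $\A$, while states need not be normal. To handle this, use that $\varphi|_\A=\tau$ is normal, together with the estimate already used in the proof of Theorem \ref{coamenability1}. By Cauchy--Schwarz,
\[
|\varphi(xT)|\leq\varphi(xx^*)^{1/2}\|T\|_\infty=\|x\|_2\|T\|_\infty \quad\text{for } x\in\A,
\]
and similarly $|\varphi(Tx)|\leq\|T\|_\infty\|x\|_2$. Hence $x\mapsto\varphi(xT)-\varphi(Tx)$ is $\|\cdot\|_2$-continuous on bounded sets of $\A$. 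By Kaplansky density, each $a\in\A$ is a $\|\cdot\|_2$-limit of a bounded sequence from $\bigcup_m\A_m$, so $\varphi$ is $\A$-central. Theorem \ref{coamenability1} (2)$\Rightarrow$(1) then finishes the first claim.

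For the ``in particular'' statement, apply Theorem \ref{coamenabilityfinitechain} to the chain $\A_1\subseteq\cdots\subseteq\A_n$. This gives weak compactness of $\A_1\subset\A_n$ for every $n$. Then apply the first part with $\A_1$ in place of $\A_0$.
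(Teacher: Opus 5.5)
Your proof is correct. It differs from the paper's mainly in which characterization of Theorem \ref{coamenability1} it runs through.

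The paper passes to crossed products $\mathcal M_n=\mathcal A_n\rtimes G$ and argues as follows:
\begin{enumerate}
\item It uses (1)$\Leftrightarrow$(3) to turn weak compactness of $\mathcal A_0\subset\mathcal A_n$ into co-amenability of $\mathcal M_0\subset\mathcal M_n$.
\item It compresses by $e_{\mathcal M_n}$ to get $\mathcal M_n$ amenable relative to $\mathcal M_0$ inside $\mathcal M=\mathcal A\rtimes G$.
\item It cites \cite[Lemma 2.7]{DHI16} for the passage to the WOT-closure of the union, and returns via (3)$\Rightarrow$(1).
\end{enumerate}

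You instead stay in the abelian basic construction $\langle\mathcal A,e_{\mathcal A_0}\rangle$ and use characterization (2). This means you must carry $G$-invariance by hand, both through the compression and through the weak-$*$ limit. The compression step works because $e_{\mathcal A_n}$ commutes with the Koopman unitaries and lies in $(J\mathcal A_0J)'$. Your Steps 2--3 are exactly the limiting argument behind the cited lemma, specialized to this setting: take a cluster point of compressed states, then extend centrality using normality of $\varphi$ on $\mathcal A$.

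What your route buys is a self-contained proof that needs neither the crossed-product identification nor the external citation. The paper's route is shorter on the page and parallels the proof of Theorem \ref{coamenabilityfinitechain}. There, working with crossed products is natural because the transitivity input \cite[Proposition 2.4(3)]{OP07} is formulated for relative amenability of von Neumann algebras. Your ``in particular'' part relies on that theorem just as the paper's does.

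A small simplification: the bound $|\varphi(xT)-\varphi(Tx)|\leq 2\|T\|_\infty\|x\|_2$ is linear in $\|x\|_2$. So you need neither bounded approximants nor Kaplansky density; plain $\|\cdot\|_2$-density of $\bigcup_m\mathcal A_m$ in $\mathcal A$ suffices (for instance $\mathbb E_{\mathcal A_m}(a)\to a$). Also, for Step 1 you only need the containment of the compression $p_n\mathcal N p_n$, viewed on $L^2(\mathcal A_n)$, in $\langle\mathcal A_n,e_{\mathcal A_0}\rangle$, which the commutant computation gives directly.
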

\begin{proof}
    Denote by $\M_i=\mathcal A_i\rtimes G$ for all $i\geq0$ and let $\M=\A\rtimes G$. From the assumptions and Theorem \ref{coamenability1} we have that $\M_n$ is amenable relative to $\M_0$ inside $\M_n$, and hence inside $\M$ by the compression argument in the preceding proof. Then \cite[Lemma 2.7]{DHI16} gives that $\M=\overline{\bigcup_n\M_n}^{\rm WOT}$ is amenable relative to $\M_0$ inside $\M$. Using Theorem \ref{coamenability1}, this yields the first conclusion. The final assertion follows by applying Theorem \ref{coamenabilityfinitechain} first.
\end{proof}

\noindent The following von Neumann algebraic characterization of distal towers in terms of quasinormalizers of subalgebra inclusions was first obtained in an unpublished work of the first author and J. Peterson \cite{CP11}. A detailed proof can also be found in \cite[Corollary 4.25]{BCCMSW24}. To properly recall the result we first recall the notion of quasinormalizers.

\begin{definition}
    Given an inclusion $\mathcal N\subseteq \M$ of von Neumann algebras, the \emph{quasinormalizer} $\mathcal{QN}(\mathcal N\subseteq \M)$ is the $*$-subalgebra of $\M$ consisting of all $x\in \M$ for which there are finitely many $x_1,\dots,x_k\in \M$ satisfying $\mathcal N x\subset \sum_i x_i\mathcal N$ and $x\mathcal N\subset \sum_i \mathcal Nx_i$.
\end{definition}
\begin{theorem}\label{towervna}
    Let $G\curvearrowright X$ be a pmp ergodic action and let $\{G \curvearrowright (X_\alpha,\mu_\alpha)\}_{\alpha\leq \eta}$ be the canonical Furstenberg--Zimmer tower. Consider the associated crossed product von Neumann algebras $\M=L^\infty(X)\rtimes G$ and $\M_\alpha=L^\infty(X_\alpha)\rtimes G$ for each ordinal $\alpha\leq \eta$. The following hold:
    \begin{enumerate}
        \item For all $\alpha\le \beta\le \eta$, we have the inclusions $\mathcal L(G)=\M_0\subseteq \M_\alpha\subseteq \M_\beta\subseteq \M_\eta\subseteq \M$.
        \item For all $\alpha<\eta$, we have that $\mathcal{QN}(\M_\alpha\subseteq\M)''=\M_{\alpha+1}$. At the terminal stage, $\mathcal{QN}(\M_\eta\subseteq\M)''=\M_\eta$.
        \item For any limit ordinal $\beta\leq \eta$, we have that $\overline{\bigcup_{\alpha< \beta}L^\infty(X_\alpha)}^{\rm WOT}=L^\infty(X_\beta)$ and $\overline{\bigcup_{\alpha< \beta}\M_\alpha}^{\rm WOT}=\M_\beta$.
        \item One can find an infinite sequence $(g_n)_n\subset G$ so that for all $x,y\in L^\infty(X)\ominus L^\infty (X_\eta)$, we have
        \begin{equation*}
            \lim_{n\rightarrow\infty}|| \mathbb E_{L^\infty (X_\eta)}(x\sigma_{g_n}(y))||_2=0.
        \end{equation*}
    \end{enumerate}
\end{theorem}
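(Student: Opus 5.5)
Item (1) is immediate: each factor map $X_\beta\to X_\alpha$ for $\alpha\le\beta$ gives a $G$-invariant inclusion $L^\infty(X_\alpha)\subseteq L^\infty(X_\beta)\subseteq L^\infty(X)$, and taking crossed products preserves inclusions. Since $X_0$ is the one-point system, $\M_0=\mathcal L(G)$.

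For (2), I would first reduce the quasinormalizer to the abelian level. Set $\A_\alpha=L^\infty(X_\alpha)$ and $\A=L^\infty(X)$. Take $x\in\mathcal{QN}(\M_\alpha\subseteq\M)$ and expand it in Fourier series $x=\sum_g x_g u_g$ with $x_g\in\A$. Since $u_g\in\mathcal L(G)\subseteq\M_\alpha$ and $\mathcal{QN}$ is a bimodule over $\M_\alpha$, one checks, using conditional expectations onto $\A$ and the finite generation of the bimodule $\M_\alpha x\M_\alpha$, that each coefficient $x_g$ lies in the $\|\cdot\|_2$-closure of the $\A_\alpha$-module spanned by finitely many vectors. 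This closure is $G$-invariant up to the finitely many generators. Hence $x_g$ lies in a finitely generated $G$-invariant $\A_\alpha$-submodule of $L^2(X)$, so $x_g\in L^2(X_{\alpha+1})$ by maximality of the compact extension $X_{\alpha+1}$.

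Conversely, the elements of a finitely generated $G$-invariant $\A_\alpha$-module can be truncated to bounded elements that quasinormalize $\M_\alpha$. This gives $\M_{\alpha+1}\subseteq\mathcal{QN}''$, and hence equality. The terminal statement follows because $X_\eta$ has no nontrivial compact extension inside $X$.

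The main obstacle is the coefficient step: a quasinormalizing element is only controlled modulo $\M_\alpha$, not modulo $\A_\alpha$. I would handle it by testing against $u_h$ on both sides. Concretely, $\mathbb E_{\A}(u_h^* x u_k)$ is built from the coefficients of $x$ twisted by the action, and the containment $\M_\alpha x\subseteq\sum_i x_i\M_\alpha$ then shows that the $G$-orbit of each coefficient lies in $\sum_i\A_\alpha x_{i,\cdot}$ with bounded $\A_\alpha$-coefficients. This matches condition 2 of Definition \ref{compactext}.

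For (3), the inverse limit construction gives $\bigcup_{\alpha<\beta}L^2(X_\alpha)$ dense in $L^2(X_\beta)$. The algebra $\bigcup_{\alpha<\beta}\A_\alpha$ is therefore weakly dense in $\A_\beta$, and adjoining the unitaries $u_g$ gives the crossed product statement.

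For (4), I would use that $X\to X_\eta$ is relatively weakly mixing. Equivalently, the diagonal action on the relative product $X\times_{X_\eta}X$ has no nonzero invariant vector orthogonal to $L^2(X_\eta)$-type functions. The standard Furstenberg–Zimmer characterization then yields, for any finite family of vectors in $L^2(X)\ominus L^2(X_\eta)$ and any $\varepsilon>0$, some $g$ with $\|\mathbb E_{\A_\eta}(x\sigma_g(y))\|_2<\varepsilon$ for all pairs in the family.

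A diagonal argument over a countable $\|\cdot\|_2$-dense subset, enumerated increasingly with $\varepsilon_n=1/n$, produces a single sequence $(g_n)$. Density together with the bound $\|\mathbb E_{\A_\eta}(x\sigma_g(y))\|_2\le\|x\|_\infty\|y\|_2$ then extends the convergence to all $x,y$. This extension needs a uniform norm bound, which I would get by approximating in $\|\cdot\|_2$ while keeping $\|\cdot\|_\infty$ bounded via Kaplansky's density theorem.
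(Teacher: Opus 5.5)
The paper does not prove this theorem; it quotes it from \cite{CP11} and \cite[Corollary 4.25]{BCCMSW24}, so your argument has to stand on its own. Items (1), (3) and (4) are fine as you sketch them. For (4), relative weak mixing of $X\to X_\eta$, the min-norm/convex-hull argument on $X\times_{X_\eta}X$, a diagonal argument, and a density step with $\|\cdot\|_\infty$-bounded approximants is the standard route.

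The problems are in item (2), in both directions. Start with $\mathcal{QN}(\M_\alpha\subseteq\M)\subseteq\M_{\alpha+1}$. Reading off the orbit of $x_g$ from the coefficients of $u_hxu_h^*$ is the right idea: $\sigma_h(x_g)$ is its coefficient at $hgh^{-1}$. Write $(y)_s=\mathbb E_\A(yu_s^*)$ and $u_hxu_h^*=\sum_{i=1}^k x_in_i$ with $n_i\in\M_\alpha$. Then
$\sigma_h(x_g)=\sum_i\sum_{s\in G}(x_i)_s\,\sigma_s\bigl((n_i)_{s^{-1}hgh^{-1}}\bigr)$.
This is an $\A_\alpha$-combination of \emph{all} Fourier coefficients of the $x_i$, which is infinitely many vectors. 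So the orbit does not land in a finitely generated module, as you claim. Moreover, the bound on the $\A_\alpha$-coefficients uniform in $h$, which Definition \ref{compactext} requires, is not supplied by the algebraic definition of $\mathcal{QN}$, because the $n_i$ depend on $h$.

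Two steps close this. First, $\{(m,m_1,\dots,m_k)\in\M_\alpha^{k+1}: mx=\sum_ix_im_i\}$ is a Banach space that projects onto $\M_\alpha$. The open mapping theorem then gives $C$ and a choice with $\|m_i\|\le C\|m\|$. Second, replace each $x_i$ by a finitely supported Fourier truncation $x_i^F$ with $\|x_i-x_i^F\|_2<\delta$. Since $\|(x_i-x_i^F)n_i\|_2\le C\delta$, the whole orbit $\{\sigma_h(x_g)\}_h$ is uniformly $kC\delta$-close to $\A_\alpha$-combinations, with coefficients of norm at most $C$, of the finitely many vectors $(x_i)_s$, $s\in F$. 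This is approximate rather than exact finite generation. It is exactly condition 2 of Definition \ref{compactext}, and maximality then gives $x_g\in\A_{\alpha+1}$.

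For $\M_{\alpha+1}\subseteq\mathcal{QN}(\M_\alpha\subseteq\M)''$, ``truncation'' is not an argument. An element $b\in\A$ quasinormalizes $\M_\alpha$ only if every $\sigma_h(b)$ lies \emph{exactly} in $\sum_j b_j\A_\alpha$ for finitely many \emph{bounded} $b_j$. Generators of a finitely generated invariant module are a priori only in $L^2$, and cutting off large values destroys $G$-invariance. So a genuine extra input is needed.

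One option uses ergodicity of $X$. Take a fiberwise orthonormal basis $b_1,\dots,b_k$ of a finitely generated $G$-invariant $\A_\alpha$-module $V$, so $\mathbb E_{\A_\alpha}(b_j^*b_l)=\delta_{jl}p_j$. The function $\sum_j|b_j|^2$ does not depend on the choice of basis. Since $(\sigma_h(b_j))_j$ is again such a basis, the function is $G$-invariant, hence constant by ergodicity of $X$, and so $\|b_j\|_\infty\le\sqrt k$. Then $\sigma_h(b_j)=\sum_l b_l\,\mathbb E_{\A_\alpha}(b_l^*\sigma_h(b_j))$ with coefficients of norm at most $1$. By $\|\cdot\|_2$-continuity, $mb_j=\sum_l b_l\,\mathbb E_{\M_\alpha}(b_l^*mb_j)$ for all $m\in\M_\alpha$, so $b_j\in\mathcal{QN}(\M_\alpha\subseteq\M)$. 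Alternatively, use Zimmer's structure theorem (compact extensions of ergodic systems are isometric, and matrix coefficients work), or Popa's lemma that $\M_\alpha$-bimodules in $L^2(\M)$ of finite left and right dimension lie in $L^2(\mathcal{QN}'')$.

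With both repairs, your coefficient-level route is a valid and fairly elementary alternative to working with finite-trace projections in $\langle\M,e_{\M_\alpha}\rangle\cong\langle\A,e_{\A_\alpha}\rangle\rtimes G$.
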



\noindent Finally, as a consequence of the prior results, we record the following corollary, which was also obtained in the previous work, \cite{CP11}.

\begin{corollary}\label{cor: dital implies weak compact}
    Any free, ergodic, distal action $G \curvearrowright (X, \mu)$ is weakly compact.
\end{corollary}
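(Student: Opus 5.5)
The plan is to show that the trivial extension $G\curvearrowright(\C1\subset L^\infty(X))$ is weakly compact in the sense of Definition \ref{weaklycptexten}, by climbing the canonical Furstenberg--Zimmer tower; this is the same as weak compactness in the sense of Definition \ref{def:weak compact}. Weak compactness is equivalent, via Theorem \ref{coamenability1}, to co-amenability of $\mathcal L(G)\subseteq L^\infty(X)\rtimes G$. Since the action is distal, the canonical tower $\{G\curvearrowright X_\alpha\}_{\alpha\le\eta}$ terminates at $X_\eta=X$ modulo null sets, with $\eta$ a countable ordinal. Put $\A_\alpha=L^\infty(X_\alpha)$ and $\M_\alpha=\A_\alpha\rtimes G$.

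The argument is transfinite induction on $\beta\le\eta$. The statement to prove at each stage is that $G\curvearrowright(\A_0\subset\A_\beta)$ is weakly compact, or equivalently that $\M_\beta$ is co-amenable relative to $\M_0=\mathcal L(G)$. The base case $\beta=0$ is trivial.

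\textbf{Successor step.} The extension $\A_\alpha\subset\A_{\alpha+1}$ is compact by construction of the tower. Corollary \ref{compactcoamenable} therefore shows it is weakly compact. Combining this with the inductive hypothesis through Theorem \ref{coamenabilityfinitechain}, applied to the chain $\A_0\subseteq\A_\alpha\subseteq\A_{\alpha+1}$, gives the claim for $\alpha+1$.

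\textbf{Limit step.} Let $\beta$ be a limit ordinal. Since $\beta$ is countable, I choose an increasing cofinal sequence $\alpha_1<\alpha_2<\cdots$ in $\beta$. By Theorem \ref{towervna}(3), $\A_\beta$ is the WOT-closure of $\bigcup_m\A_{\alpha_m}$. By induction, each extension $\A_0\subset\A_{\alpha_m}$ is weakly compact. Theorem \ref{weaklycompactlimit}, applied to the sequence $\A_0\subseteq\A_{\alpha_1}\subseteq\A_{\alpha_2}\subseteq\cdots$, then gives weak compactness of $\A_0\subset\A_\beta$.

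Taking $\beta=\eta$ finishes the proof. Freeness and ergodicity are only needed so that the canonical tower of Theorem \ref{towervna} is available.

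The main subtlety I expect is bookkeeping rather than analysis. First, distality has to yield $X_\eta=X$ for the \emph{canonical} tower. If one prefers to work with an arbitrary distal tower, the same induction runs verbatim, since only compactness of the successor steps and the inverse-limit property at limit stages are used. Second, one needs the identification of compactness in Definition \ref{compextensionpmp} with Definition \ref{compactext}, which was recalled above.
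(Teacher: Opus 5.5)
Your proposal is correct and matches the paper's proof essentially step for step. Both run a transfinite induction along the canonical tower, using Corollary \ref{compactcoamenable} and Theorem \ref{coamenabilityfinitechain} at successor stages and a countable cofinal sequence together with Theorem \ref{weaklycompactlimit} at limit stages, and both conclude from $X_\eta=X$ and the observation that the relative definition with base $\mathbb C$ is ordinary weak compactness.
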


\begin{proof} Let $(X_\alpha)_{\alpha\leq\eta}$ be the canonical distal tower and let $\A_\alpha=L^\infty(X_\alpha)$. We prove by transfinite induction that $G\curvearrowright(\mathbb C\subset\A_\alpha)$ is weakly compact. At $\alpha=0$ this is immediate. At a successor stage, the extension $G\curvearrowright(\A_\alpha\subset\A_{\alpha+1})$ is compact, and hence weakly compact by Corollary \ref{compactcoamenable}. Then Theorem \ref{coamenabilityfinitechain} yields the inductive step. At a nonzero limit ordinal $\lambda\leq\eta$, choose an increasing cofinal sequence $(\alpha_n)_n$ in $\lambda$, which is possible since $\eta$ is countable. Since $\A_\lambda=\overline{\bigcup_n\A_{\alpha_n}}^{\rm WOT}$, then Theorem \ref{weaklycompactlimit}, with fixed base $\mathbb C$, gives the limit step. Finally, distality implies that $X_\eta=X$, and the definition for the base $\mathbb C$ is precisely the ordinary weak compactness definition. \end{proof}

\section{Proofs of the main results}

In this section we prove the main result of the paper, namely Theorem \ref{mainthm:1}.
\begin{theorem}\label{thm: weak compact implies isp} Let $G$ be a non-inner amenable group and let $G\curvearrowright^\sigma (\A, \tau)$ be a weakly compact trace-preserving action on an abelian von Neumann algebra $\A$. Then for any intermediate von Neumann subalgebra $\eL(G)\subseteq \Q \subseteq \A\rtimes G$ there is a $G$-invariant von Neumann subalgebra $\B\subseteq \A$ such that $\Q= \B\rtimes G$. 
\end{theorem}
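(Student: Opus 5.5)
Set $\M=\A\rtimes G$ and let $\B$ be the von Neumann algebra generated by the Fourier coefficients $\{\mathbb E_\A(x u_g^*)\,:\, x\in\Q,\ g\in G\}$. Since $\Q\supseteq\eL(G)$, we have $u_h\Q u_h^*=\Q$, and $\mathbb E_\A(u_hxu_h^*u_{hgh^{-1}}^*)=\sigma_h(\mathbb E_\A(xu_g^*))$, so $\B$ is $G$-invariant. Clearly $\Q\subseteq\B\rtimes G$. The whole task is the reverse inclusion $\B\rtimes G\subseteq\Q$, and for this it suffices to show $\mathbb E_\A(xu_g^*)\in\Q$ for every $x\in\Q$, $g\in G$. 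Since $\mathbb E_\A(xu_g^*)=\mathbb E_\A(xu_g^*)$ with $xu_g^*\in\Q$, this reduces to showing $\mathbb E_\A(\Q)\subseteq\Q$.

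The strategy is the standard ``comultiplication'' trick combined with spectral gap. Consider the normal $*$-homomorphism $\Delta:\M\to\M\bar\otimes\M$ given by $\Delta(au_g)=au_g\otimes u_g$. It is trace preserving and satisfies $\Delta(\Q)\subseteq\Q\bar\otimes\M$, since $\Delta(x)=\sum_g x_gu_g\otimes u_g$ and (heuristically) this is $x\otimes 1$ twisted by the diagonal copy of $\eL(G)$. I will need this precisely; the cleaner version is that $\Delta(\M)\subseteq \M\bar\otimes\eL(G)$, and $\Delta(\Q)$ lies in $\Q\bar\otimes \eL(G)$ because slicing by $\phi\in\eL(G)_*$ gives elements of the form $\sum_g\phi(u_g)x_gu_g$, which are Herz--Schur type multipliers of $x$. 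Whether those lie in $\Q$ is not automatic, so I would instead work directly with $\M\bar\otimes\M$. Weak compactness enters by building a sequence of unit vectors $\xi_n\in L^2(\M\bar\otimes\M)$ from the elements of Definition~\ref{def:weak compact}: set $\zeta_n=\eta_n^{1/2}\in L^2(\A\bar\otimes\A)$, with $\eta_n$ the weakly compact net, and use the unitary $a\otimes\bar a$ asymptotic invariance. The key computation to aim for is that for $x=\sum_g x_g u_g\in\M$, the vectors $(x\otimes1)\zeta_n$ and $\zeta_n(1\otimes\cdots)$ asymptotically relate $x$ to $\mathbb E_\A(x)$. Concretely, I expect $\langle (x\otimes 1)\zeta_n,\zeta_n\rangle\to\tau(x)$ type formulas, and more usefully that the map $\Phi_n(x)=(\mathrm{id}\otimes\tau)\bigl(\zeta_n^*(x\otimes1)\zeta_n\bigr)$, or a similar slice, approximates a $\Q$-valued map coinciding with $\mathbb E_\A$ on $\Q$.

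An alternative I would actually try first is as follows. Let $\mathcal P=\Q'\cap\langle\M,e_\A\rangle$-type objects, or more directly use co-amenability: by Theorem~\ref{coamenability1} with the extension $\C\subset\A$, $\eL(G)\subseteq\M$ is co-amenable, so there is an $\M$-central state $\Psi$ on $\langle\M,e_{\eL(G)}\rangle$ restricting to $\tau$. Consider $\M\bar\otimes\M\supseteq\Delta(\M)$ and the vectors $\Delta(x)-x\otimes1$ for $x\in\Q$. Corollary~\ref{corspectralgap} gives, for any $\xi$ almost commuting with $u_g\otimes u_g$, $g\in K$, that $\xi$ is close to $\mathbb E_{\A\bar\otimes\A}(\xi)$. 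I would apply this to $\xi=(x\otimes 1)\omega_n$, where $\omega_n$ is a $\Delta(\eL(G))$-almost-central vector coming from weak compactness (the $G$-invariance condition (1)) which is also almost central for $\A\otimes\bar\A$ (condition (2)). The almost commutation of $\xi$ with $u_g\otimes u_g$ fails only through $[x\otimes1,u_g\otimes u_g]$, so one should take $x$ in $\Q'\cap$ something, or more plausibly run the argument on $L^2(\M)\ominus L^2(\Q)$. That is, take $y\in\A$, write $y=\mathbb E_\Q(y)+y^\perp$, and use the relative spectral gap (Theorem~\ref{specgap1} applied to $\Q\subseteq\M$ is not available since $\Q$ is not a crossed product, so Corollary~\ref{corspectralgap} is the substitute) to show $y^\perp$ is $\Q$-orthogonal in a way forcing $\mathbb E_\A(\Q)\subseteq\Q$.

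The main obstacle is exactly this step: producing the correct vector in $L^2(\M\bar\otimes\M)$ that is asymptotically invariant under $\mathrm{Ad}(u_g\otimes u_g)$, so that Corollary~\ref{corspectralgap} pins it to $L^2(\A\bar\otimes\A)$. Weak compactness must be what makes the ``diagonal'' $\Delta(\Q)$-bimodule vector $\zeta_n$ almost invariant while still encoding $x$. Non-inner amenability then forces the mass onto $\A\bar\otimes\A$, and the resulting limit should be $\mathbb E_\A(x)\otimes1$ realized as a limit of elements of $\Q$, giving $\mathbb E_\A(x)\in\Q$. I would first test this on the compact case via Corollary~\ref{compactcoamenable}, where $\eta_0$ is explicit, to find the correct formula.
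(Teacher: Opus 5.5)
\textbf{There is a genuine gap.} Your reduction is fine. With $\B$ generated by the Fourier coefficients of elements of $\Q$, you have $\Q\subseteq\B\rtimes G$, and the reverse inclusion amounts to $\mathbb E_\A(\Q)\subseteq\Q$. Taking adjoints of the projections on $L^2(\M)$, this is equivalent to the commuting-square condition $\mathbb E_\Q(\A)\subseteq\A$, which is what the paper proves before invoking \cite[Theorem 3.2]{CD20}.

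However, the proposal stops where the proof has to start: you never give an argument for $\mathbb E_\A(\Q)\subseteq\Q$, and each candidate mechanism you list fails.
\begin{itemize}
\item The vectors $\eta_n^{1/2}$ already lie in $L^2(\A\bar\otimes\A)$. Corollary~\ref{corspectralgap} says nothing about them, and they carry no information about $\Q$.
\item $(x\otimes 1)\omega_n$ is not almost $\mathrm{Ad}(u_g\otimes u_g)$-invariant, as you note yourself.
\item $\Delta(\Q)\subseteq\Q\bar\otimes\M$ is essentially the conclusion. Slicing by the normal functional $\tau(\cdot\,u_h^*)$ in the second leg would give $x_hu_h\in\Q$.
\end{itemize}
Co-amenability of $\eL(G)\subseteq\M$ is mentioned but never used. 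As written, neither weak compactness nor non-inner amenability is brought to bear on $\Q$.

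\textbf{The missing idea.} You need to encode the subalgebra $\Q$, not an individual $x$, by compressing the weak compactness net itself. Since $\eL(G)\subseteq\Q$, the unitaries $u_g\otimes u_g$ lie in $\M\bar\otimes\Q$. Hence $\mathbb E_{\M\bar\otimes\Q}$, being $\|\cdot\|_1$-contractive and bimodular, preserves condition (i). By Powers--St\o rmer, the vectors $\mathbb E_{\M\bar\otimes\Q}(\xi_n)^{1/2}\in L^2(\M\bar\otimes\M)$ are almost $\mathrm{Ad}(u_g\otimes u_g)$-invariant, and they genuinely see $\Q$.

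Now Corollary~\ref{corspectralgap} has content: these vectors are asymptotically in $L^2(\A\bar\otimes\A)$. After a careful $L^2$-to-$L^1$ passage for squares, this gives
$\|\mathbb E_{\M\bar\otimes\A}\mathbb E_{\M\bar\otimes\Q}(\xi_n)-\mathbb E_{\M\bar\otimes\Q}(\xi_n)\|_1\to 0$.

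Next take a self-adjoint unitary $a\in\A$ and pair $a\otimes\mathbb E_\Q(a)\in\M\bar\otimes\Q$ against $\xi_n$:
\begin{itemize}
\item condition (iii) turns $\tau((1\otimes y)\xi_n)$ into $\tau(y)$;
\item condition (ii) moves $a$ from the first leg to the second;
\item the $L^1$ estimate lets you replace $\mathbb E_\Q(a)$ in the second leg by $\mathbb E_\Q\mathbb E_\A\mathbb E_\Q(a)$ at small cost.
\end{itemize}
Iterating $k$ times and applying the von Neumann--Halperin alternating projections theorem gives $\|\mathbb E_\Q(a)\|_2^2\leq\|\mathbb E_{\Q\cap\A}(a)\|_2^2$, so $\mathbb E_\Q(a)=\mathbb E_{\Q\cap\A}(a)$. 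Finally, $\mathbb E_\Q(au_g)=\mathbb E_\Q(a)u_g$ yields $\Q=(\Q\cap\A)\rtimes G$.
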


\begin{proof} Let $(\xi_n)_n\subset L^1(\A\bar{\otimes}\A)$ be a net of positive, unital elements satisfying the conditions that make the $G\curvearrowright(\A,\tau)$ weakly compact. Since $\mathbb E_{\M \bar\otimes \Q}$ is $\|\cdot \|_1$-contractive, $\M\bar\otimes \Q$-bimodular and $\eL(G)\subset \Q$, condition (i) for $\xi_n$ implies that 

\begin{equation}\label{specgap0} 
    \lim_n\| (u_g \otimes u_g)\mathbb E_{\M \bar\otimes \Q}(\xi_n) (u_{g^{-1}}\otimes u_{g^{-1}})- \mathbb E_{\M \bar\otimes \Q}(\xi_n)\|_1=0
\end{equation}

\noindent Letting $\eta_n =\mathbb E_{\M \bar\otimes \Q}(\xi_n)^{1/2} \in L^2 (\M\bar\otimes \Q )$, and using relation \eqref{specgap0} together with Powers-St\o rmer inequality we further get

\begin{equation}\label{specgap5}
    \lim_n\| (u_g \otimes u_g) \eta_n(u_{g^{-1}}\otimes u_{g^{-1}})- \eta_n \|_2=0.
\end{equation}

\noindent Since $G$ is non-inner amenable, by Corollary \ref{corspectralgap} there exists a finite set $K\subset G$ and $S>0$ such that for all $\xi\in L^2(\M\bar \otimes \M)$ we have 

\begin{equation*}
    \sum_{g\in K}\| (u_g \otimes u_g)\xi -\xi (u_g \otimes u_g) \|^2_2\geq S\|\xi- \mathbb E_{\A \bar\otimes \A }(\xi)\|^2_2.
\end{equation*}      

\noindent Letting $\xi=\eta_n$ in the prior relation and using \eqref{specgap5}, we further get that 

\begin{equation}\label{el2close}
    \lim_n\|\eta_n- \mathbb E_{\A \bar\otimes \A }(\eta_n)\|_2=0.
\end{equation}
We continue with the following. 
\begin{claim}\label{c1}
\begin{equation}\label{closenorm1}
    \lim_n\| \mathbb E_{\M \bar\otimes \A }\circ \mathbb E_{\M \bar\otimes \Q}(\xi_n) - \mathbb E_{\M \bar\otimes \Q}(\xi_n) \|_1=0.
\end{equation}
\end{claim}

\noindent\emph{Proof of Claim \ref{c1}.} Let $\mu_n\in \M \bar\otimes \M$ such that $\lim_n\|\mu_n-\eta_n\|_2=0$. From \eqref{el2close} we obtain 

\begin{equation}\label{ineq1}
    \lim_n\|\mu_n- \mathbb E_{\A \bar\otimes \A }(\mu_n)\|_2=0.
\end{equation}

\noindent Consider the partial isometry $v_n\in \M \bar\otimes \M$ satisfying $|\mathbb E_{\M \bar\otimes \A}(\mu_n^2)-\mu_n^2|= v_n(\mathbb E_{\M \bar\otimes \A}(\mu_n^2)-\mu_n^2)$. Notice that 

\begin{equation}\label{eq4}\begin{split}
    \|\mathbb E_{\M \bar\otimes \A}(\mu_n^2)-\mu_n^2\|_1&= \tau(v_n (\mathbb E_{\M \bar\otimes \A}(\mu_n^2)-\mu_n^2))
    =  \tau(w_n(\mathbb E_{\M \bar\otimes \A}(\mu_n^2)-\mu_n^2)),
    \end{split}
\end{equation} 

\noindent where $w_n = v_n - \mathbb E_{\M\bar\otimes \A}(v_n)$. Using algebraic manipulations we see that 

\begin{align}\label{ineq2}
    \tau(w_n(\mathbb E_{\M \bar\otimes \A}(\mu_n^2)-\mu_n^2))&= \tau(w_n (\mathbb E_{\M\bar\otimes \A }(\mu_n^2) -(\mu_n - \mathbb E_{\M\bar\otimes \A}(\mu_n))^2 -\mathbb E_{\M\bar\otimes \A}(\mu_n)(\mu_n - \mathbb E_{\M\bar\otimes \A}(\mu_n))\nonumber\\
    &\quad\quad\quad- (\mu_n - \mathbb E_{\M\bar\otimes \A} (\mu_n))\mathbb E_{\M\bar\otimes \A}(\mu_n)-\mathbb E_{\M\bar\otimes \A}(\mu_n)^2)).
\end{align}

\noindent Furthermore, using that $\tau(w_n x)=0$ for all $x\in \M \bar\otimes \A$ and Cauchy-Schwarz inequality, we see the previous equality gives

\begin{equation}\label{ineq3}\begin{split}
    \tau(-w_n&(\mu_n - \mathbb E_{\M\bar\otimes \A}(\mu_n))^2 - w_n\mathbb E_{\M\bar\otimes \A}(\mu_n)(\mu_n - \mathbb E_{\M\bar\otimes \A}(\mu_n)) - w_n(\mu_n - \mathbb E_{\M\bar\otimes \A} (\mu_n))\mathbb E_{\M\bar\otimes \A}(\mu_n))\\
    &\leq \|w_n (\mu_n - \mathbb E_{\M\bar\otimes \A}(\mu_n))\|_2 \|\mu_n - \mathbb E_{\M\bar\otimes \A}(\mu_n)\|_2 + \|w_n\mathbb E_{\M\bar\otimes \A}(\mu_n)\|_2\| \mu_n - \mathbb E_{\M\bar\otimes \A}(\mu_n)\|_2\\
    &\quad\quad +\|\mathbb E_{\M\bar\otimes \A}(\mu_n)w_n\|_2 \| \mu_n - \mathbb E_{\M\bar\otimes \A}(\mu_n)\|_2 \\
    &\leq \|w_n\|_\infty\left (\|\mu_n - \mathbb E_{\M\bar\otimes \A}(\mu_n)\|_2^2+2\|\mu_n\|_2\| \mu_n - \mathbb E_{\M\bar\otimes \A}(\mu_n)\|_2\right ) \end{split}
\end{equation}

\noindent As $\|w_n\|_\infty \leq 2$ and $\|\mu_n\|_2\leq 1$ for all $n$, altogether, relations \eqref{ineq1}, \eqref{eq4}, \eqref{ineq2} and \eqref{ineq3} yield

\begin{equation*}\label{closenorm1'}
    \lim_n\| \mathbb E_{\M \bar\otimes \A }(\mu_n^2 )- \mu_n^2 \|_1=0.
\end{equation*}

\noindent Finally, since $\lim_n\|\mu_n -\eta_n\|_2=0$, the prior inequality and Powers-St\o rmer's inequality gives \eqref{closenorm1}. $\hfill\blacksquare$
\vskip 0.05in
\noindent Next we prove the following.

\begin{claim}\label{c2} For every $a\in \A$ we have that $\mathbb E_\Q(a)= \mathbb E_{\Q\cap \A}(a)$.
    
\end{claim}

\noindent\emph{Proof of Claim \ref{c2}.} Since $\A$ is the norm-closed linear span of its self-adjoint
unitaries, it suffices to show that for every
$a\in\mathscr U(\A)$ with $a=a^*$ and every $\varepsilon>0$,
we have
$\|\mathbb E_\Q(a)-\mathbb E_{\Q\cap\A}(a)\|_2\leq\varepsilon$.

\vskip 0.05in
\noindent By the von Neumann-Halperin projections theorem \cite{vN50,Ha62}, there exists $k\in \mathbb N$ such that 

\begin{equation}\label{convpingpong}
    \|(\mathbb E_\Q \circ \mathbb E_\A \circ \mathbb E_\Q)^k(a)-\mathbb E_{\Q \cap \A}(a)\|_2\leq \varepsilon^2.
\end{equation}

\noindent Using the triangle inequality and the $\tau$-invariance of conditional expectations $\mathbb E_{\M\bar\otimes \Q}$ and $\mathbb E_{\M\bar\otimes \A}$, we see for any $n$ we have

\begin{equation*}\begin{split}
    |\tau((a \otimes \mathbb E_\Q(a))\xi_n )|& = |\tau((a \otimes \mathbb E_\Q(a))\mathbb E_{\M \bar \otimes \Q}(\xi_n) )| \\
    & \leq |\tau( (a \otimes \mathbb E_\Q(a)) \mathbb E_{\M \bar\otimes \A} \circ \mathbb E_{\M \bar \otimes \Q}(\xi_n) )|  + \|\mathbb E_{\M \bar\otimes \A} \circ \mathbb E_{\M \bar \otimes \Q}(\xi_n)-\mathbb E_{\M \bar \otimes \Q}(\xi_n)\|_1 \\ 
    & = |\tau( (a \otimes \mathbb E_\A\circ \mathbb E_\Q(a) )\mathbb E_{\M \bar \otimes \Q}(\xi_n) )|  + \|\mathbb E_{\M \bar\otimes \A} \circ \mathbb E_{\M \bar \otimes \Q}(\xi_n)-\mathbb E_{\M \bar \otimes \Q}(\xi_n)\|_1 \\ 
    &=|\tau( (a \otimes \mathbb E_\Q\circ \mathbb E_\A\circ \mathbb E_\Q(a) ) \xi_n)| + \|\mathbb E_{\M \bar\otimes \A} \circ \mathbb E_{\M \bar \otimes \Q}(\xi_n)-\mathbb E_{\M \bar \otimes \Q}(\xi_n)\|_1.\\    \end{split}
\end{equation*}

\noindent Using this argument $k-1$ more times we get the following inequality 

\begin{equation*}\label{ineq6}\begin{split}
    |\tau((a \otimes \mathbb E_\Q(a))\xi_n) |\leq |\tau((a \otimes (\mathbb E_\Q\circ \mathbb E_\A\circ \mathbb E_\Q)^k(a))\xi_n)| + k \|\mathbb E_{\M \bar\otimes \A} \circ \mathbb E_{\M \bar \otimes \Q}(\xi_n)-\mathbb E_{\M \bar \otimes \Q}(\xi_n)\|_1.\end{split}
\end{equation*}

\noindent Using condition (iii) for $\xi_n$, the triangle inequality, \eqref{convpingpong} and the prior inequality, we see that 

\begin{equation*}\begin{split}
    \tau (\mathbb E_\Q(a)a^*)&= \tau ( (1\otimes  ( \mathbb E_\Q(a) a^*))\xi_n)\\
    &\leq |\tau( (a \otimes \mathbb E_\Q(a)) \xi_n )| + \| \xi_n -(a^*\otimes a^*) \xi_n\|_1\\
    & \leq |\tau( (a \otimes (\mathbb E_\Q\circ \mathbb E_\A\circ \mathbb E_\Q)^k(a) ) \xi_n)|  + k \|\mathbb E_{\M \bar\otimes \A} \circ \mathbb E_{\M \bar \otimes \Q}(\xi_n)-\mathbb E_{\M \bar \otimes \Q}(\xi_n)\|_1+ \| \xi_n -(a^*\otimes a^*) \xi_n\|_1\\
    & \leq |\tau( (1 \otimes (\mathbb E_\Q\circ \mathbb E_\A\circ \mathbb E_\Q)^k(a) a^*) \xi_n)|  + k \|\mathbb E_{\M \bar\otimes \A} \circ \mathbb E_{\M \bar \otimes \Q}(\xi_n)-\mathbb E_{\M \bar \otimes \Q}(\xi_n)\|_1+ 2\| \xi_n -(a^*\otimes a^*) \xi_n\|_1\\
    & =|\tau( (\mathbb E_\Q\circ \mathbb E_\A\circ \mathbb E_\Q)^k(a) a^*)|  + k \|\mathbb E_{\M \bar\otimes \A} \circ \mathbb E_{\M \bar \otimes \Q}(\xi_n)-\mathbb E_{\M \bar \otimes \Q}(\xi_n)\|_1+ 2\| \xi_n -(a^*\otimes a^*) \xi_n\|_1\\
    & =|\tau( \mathbb E_{\Q\cap \A}(a) a^*)|  + k \|\mathbb E_{\M \bar\otimes \A} \circ \mathbb E_{\M \bar \otimes \Q}(\xi_n)-\mathbb E_{\M \bar \otimes \Q}(\xi_n)\|_1+ 2\| \xi_n -(a^*\otimes a^*) \xi_n\|_1  \\&\quad +\|(\mathbb E_\Q \circ \mathbb E_\A \circ \mathbb E_\Q)^k(a)-\mathbb E_{\Q\cap \A}(a)\|_2 \\
    &\leq \tau( \mathbb E_{\Q\cap \A}(a) a^*)  + k \|\mathbb E_{\M \bar \otimes \A} \circ \mathbb E_{\M \bar \otimes \Q}(\xi_n)-\mathbb E_{\M \bar \otimes \Q}(\xi_n)\|_1+ 2\| \xi_n -(a^*\otimes a^*) \xi_n\|_1 + \varepsilon^2.   
    \end{split}
\end{equation*}

\noindent The previous inequality shows that for all $n$ we have 

\begin{align*}
  \| \mathbb E_{\Q}(a)-\mathbb E_{\Q\cap\A}(a)\|_2^2&=|\tau(\mathbb E_{\Q}(a)a^*)-\tau(\mathbb E_{\Q\cap\A}(a)a^*)|\\
  &\leq k \|\mathbb E_{\M \bar\otimes \A} \circ \mathbb E_{\M \bar \otimes \Q}(\xi_n)-\mathbb E_{\M \bar \otimes \Q}(\xi_n)\|_1+ 2\| \xi_n -(a^*\otimes a^*) \xi_n\|_1 + \varepsilon^2. 
\end{align*}

\noindent Taking the $\lim_n$ above and using conditions (ii) for $\xi_n$ and Claim \ref{c1}, we get the desired conclusion. $\hfill\blacksquare$

\vspace{2mm}

\noindent Finally, Claim \ref{c2} and \cite[Theorem 3.2]{CD20} yield the desired conclusion.
\end{proof}

\noindent\emph{Proof of Corollary \ref{main: cor1}}. This follows directly from Corollary \ref{cor: dital implies weak compact} and  Theorem \ref{thm: weak compact implies isp}. \qed 

\section{A counterexample}

In this section we show that our main result, Theorem \ref{mainthm:1}, is optimal in a natural way---namely, it does not hold beyond the weakly compact case. Specifically, we construct a pmp, free, weak mixing action $G \curvearrowright X$ of the non-inner amenable, icc group
$G=\Z_3\wr\F_2$ for which the inclusion $\mathcal L(G)\subset L^\infty(X)\rtimes G$ admits an intermediate von Neumann algebra that does not come from a
measurable factor of the action.

\begin{prop}\label{prop:main}
Consider the groups  $H=\Z_3$, $ \Gamma=\F_2$, $K=H^{(\Gamma)}$, and $G=K\rtimes\Gamma=H\wr\Gamma$. Let $X:=H^\Gamma$ with product uniform measure and consider the following action $G \curvearrowright X$. For every $ag\in G$ ($a\in K$, $g\in \Gamma$), $h\in \Gamma$ and $(x_h)_h\in X$ we let  
\begin{equation}\label{eq:action}
 ((ag)\cdot x)_h=a_h+x_{g^{-1}h}.
\end{equation}
Then $G$ is icc, non-inner amenable and $G \curvearrowright X$ is a pmp, free, weak mixing action.
There exists an intermediate $\mathrm{II}_1$ factor $\mathcal N$ such that $\mathcal L (G)\subsetneq \mathcal N\subsetneq L^\infty(X)\rtimes G$ and $\mathcal N\cap L^\infty(X)=\C1$.
In particular, there is no $G$-invariant von Neumann subalgebra
$\mathcal C\subset L^\infty(X)$ such that $\mathcal N=C\rtimes G$ inside the given
crossed product.
\end{prop}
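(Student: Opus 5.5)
The plan is to first recast $L^\infty(X)\rtimes G$ as a crossed product by $\Gamma$ alone, and then find $\mathcal N$ coordinatewise inside a matrix algebra. Write $\M=L^\infty(X)\rtimes G=(L^\infty(X)\rtimes K)\rtimes\Gamma$. The group $K=H^{(\Gamma)}$ acts on $X=H^\Gamma$ by translation, coordinate by coordinate. Hence
\[
L^\infty(X)\rtimes K\cong\overline{\bigotimes}_{h\in\Gamma}\bigl(L^\infty(H)\rtimes H\bigr)\cong\overline{\bigotimes}_{h\in\Gamma}M_3(\C)=:R,
\]
and $\Gamma$ acts on $R$ by the Bernoulli shift of tensor factors. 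In each copy of $M_3(\C)$, $L^\infty(H)$ is the diagonal masa $D_3$, spanned by the three minimal projections $\delta_0,\delta_1,\delta_2$. The unitaries $u_a$ ($a\in H$) implementing translation form the circulant algebra $C_3=\operatorname{span}\{1,P,P^2\}$, where $P$ is the cyclic permutation matrix. Thus $\mathcal L(G)=\bigl(\overline{\bigotimes}_\Gamma C_3\bigr)\rtimes\Gamma$ and $L^\infty(X)=\overline{\bigotimes}_\Gamma D_3$. (If one uses the Fourier picture instead, the roles of $D_3$ and $C_3$ are swapped by the $3\times3$ Fourier matrix, and everything below transfers verbatim.)

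Next I choose an intermediate algebra in one coordinate. I want $C_3\subsetneq B\subsetneq M_3(\C)$ with $B\cap D_3=\C1$. Work in the eigenbasis of $P$, where $C_3$ becomes the diagonal algebra and $D_3$ becomes the circulant algebra. Take $B=M_2(\C)\oplus\C$, block diagonal with respect to the partition $\{1,2\},\{3\}$; it contains the diagonal $C_3$. A circulant $c_0I+c_1P+c_2P^2$ lies in $B$ only if its $(1,3),(2,3),(3,1),(3,2)$ entries vanish, which forces $c_1=c_2=0$. So $B\cap D_3=\C1$. Set
\[
\mathcal N:=\Bigl(\overline{\bigotimes}_{h\in\Gamma}B\Bigr)\rtimes\Gamma\subseteq \M .
\]
This is $\Gamma$-invariant because the Bernoulli shift permutes identical tensor factors. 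It contains $\mathcal L(G)$, and both inclusions $\mathcal L(G)\subsetneq\mathcal N\subsetneq\M$ are strict already at a single coordinate.

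The key steps, in order, are as follows.
\begin{enumerate}
\item Verify the routine claims about $G$ and the action. $G=\Z_3\wr\F_2$ is icc, and it is non-inner amenable because $\F_2$ is non-inner amenable and the base is normal and abelian. One can argue this via the $\ell^2$ conjugation estimate as in Theorem \ref{specgap1}, since $\Gamma$-conjugation orbits in $G\setminus\{1\}$ are infinite and free-group-like. The action is pmp, free (translation by $K$ is free, and $\Gamma$ acts by a Bernoulli shift), and weakly mixing (the Bernoulli part is mixing).
\item Show $\mathcal N\cap L^\infty(X)=\C1$. Apply $\mathbb E_{L^\infty(X)\rtimes K}$, so it suffices to show $\bigl(\overline{\bigotimes}_\Gamma B\bigr)\cap\bigl(\overline{\bigotimes}_\Gamma D_3\bigr)=\C1$. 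Using slice maps on finite tensor blocks, together with the fact that $B\cap D_3=\C1$, every element of the intersection restricts to scalars on each finite block, hence is scalar.
\item Show $\mathcal N$ is a $\mathrm{II}_1$ factor. The algebra $\overline{\bigotimes}_\Gamma B$ is a diffuse von Neumann algebra with a Bernoulli action of the icc group $\F_2$. That action is mixing and hence properly outer, and ergodic on the center $\overline{\bigotimes}_\Gamma(\C\oplus\C)$. A standard Fourier-coefficient argument then gives $\mathcal N'\cap\mathcal N=\C1$.
\item Conclude the final assertion. If $\mathcal N=\mathcal C\rtimes G$, then $\mathcal C\subseteq\mathcal N\cap L^\infty(X)=\C1$, forcing $\mathcal N=\mathcal L(G)$, which contradicts strictness.
\end{enumerate}

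I expect the main obstacle to be the bookkeeping in the identification of $L^\infty(X)\rtimes K$ with $\overline{\bigotimes}_\Gamma M_3(\C)$. It must be done compatibly with the $\Gamma$-action, and one must track precisely which copies are $D_3$ and $C_3$. Once that identification is pinned down, the intersection computation in step 2 and the factoriality in step 3 should be routine.
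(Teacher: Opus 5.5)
Your proposal is correct, and the construction is the paper's. The paper also writes $L^\infty(X)\rtimes K=\bar\otimes_{h\in\Gamma}\mathbb M_3(\C)$ and takes $\mathcal Q=\{p\}'\cap\mathbb M_3(\C)\cong\C\oplus\mathbb M_2(\C)$, where $p=\frac13(1+u+u^2)$ is the projection onto constants. This is your $B$, up to which eigenline of the shift you split off. It then sets $\mathcal N=(\bar\otimes_\Gamma\mathcal Q)\rtimes\Gamma$.

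\textbf{Where you differ: the proof of $\mathcal N\cap L^\infty(X)=\C1$.} Both arguments first reduce, via $\mathbb E_{\mathcal R}$, to $\mathcal P=\bar\otimes_\Gamma\mathcal Q$.
\begin{itemize}
\item The paper argues dynamically. Each $p_h=\frac13(1+u_h+u_h^2)$ commutes with $\mathcal P$. So for $f\in\mathcal P\cap L^\infty(X)$, the identity $0=3[f,p_h]=(f-\beta_h(f))u_h+(f-\beta_h^2(f))u_h^2$ and uniqueness of Fourier coefficients in $L^\infty(X)\rtimes K$ make $f$ $K$-invariant. Ergodicity of $K\curvearrowright X$ then finishes.
\item You argue linear-algebraically. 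The finite-block expectations $E_F$ carry $\bar\otimes_\Gamma B$ onto $\otimes_F B$ and $\bar\otimes_\Gamma D_3$ onto $\otimes_F D_3$. The slice-map identity $(\otimes_F B)\cap(\otimes_F D_3)=\otimes_F(B\cap D_3)=\C1$ forces $E_F(x)=\tau(x)$, and $x=\lim_F E_F(x)$.
\end{itemize}
Your route lets the one-coordinate computation of Lemma \ref{lem:finite} do all the work and needs no ergodicity. The paper's route bypasses the tensor-intersection step and needs only that $\mathcal P$ commutes with the $p_h$, paying instead with ergodicity of $K\curvearrowright X$.

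\textbf{Your step 3 goes beyond the paper.} The statement asserts that $\mathcal N$ is a $\mathrm{II}_1$ factor, but the paper's proof never checks this. Your argument supplies it: the shift on $\bar\otimes_\Gamma B$ is properly outer, which follows from mixing because every nontrivial element of $\F_2$ has infinite order, and it is ergodic on $Z(\mathcal P)=\bar\otimes_\Gamma Z(B)$.

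\textbf{One caution in step 1.} ``The base is normal and abelian'' is not by itself a reason for non-inner amenability: $\Z\times\F_2$ has a normal abelian subgroup with quotient $\F_2$ and is inner amenable. What works has two parts:
\begin{itemize}
\item $\Gamma$ acts freely by conjugation on $K\setminus\{1\}$, since $\F_2$ is torsion-free. So $\ell^2(K\setminus\{1\})$ is a multiple of the regular representation of the non-amenable group $\Gamma$.
\item On $G\setminus K$, the quotient map to $\Gamma$ is conjugation-equivariant, so non-inner amenability of $\F_2$ applies.
\end{itemize}
Your ``free-group-like orbits'' remark should be sharpened to say exactly this.
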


\noindent Before we proceed to the proof we need the following finite dimensional preliminary elementary result.  This is a finite-dimensional instance
of the construction of Amrutam-Glasner-Glasner
\cite[Propositions 5.2 and 5.5]{AGG24b}. More precisely, the algebra
$\mathcal Q$ below is obtained from their construction by taking
the augmentation ideal $I=(1-p)\mathcal B$. For the reader's
convenience, we include a direct proof. All matrix algebras
below are endowed with their normalized traces.

\begin{lemma}\label{lem:finite}
Let $H=\Z_3$ act on itself by translations, put $\mathcal D=\ell^\infty(H)$,
and identify 
$\mathcal F=\mathcal D\rtimes H=\mathbb M_3(\C)$. Let $u$ implement translation by $1\in H$ and consider  $\mathcal B:=W^*(u)=\mathcal L(H)$. Also consider projection  $p=\frac 13(1+u+u^2)$ and its relative commutant 
 $\mathcal Q=\{p\}'\cap \mathcal F$.
Then we have that 
 $\mathcal B\subsetneq \mathcal Q\subsetneq \mathcal F,
 \mathcal Q\cong\C\oplus \mathbb M_2(\C),\text{ and } \mathcal Q\cap \mathcal D=\C1$.
\end{lemma}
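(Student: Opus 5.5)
We will compute everything explicitly inside $\mathbb M_3(\C)$ by diagonalizing $u$. Since $u$ is the cyclic shift on $\ell^2(H)\cong\C^3$, it has three distinct eigenvalues $1,\omega,\omega^2$ ($\omega=e^{2\pi i/3}$). The spectral projections are $p_j=\frac13\sum_{k=0}^2\omega^{-jk}u^k$, so that $p=p_0$ is the rank-one projection onto the constant vector $\frac1{\sqrt3}(1,1,1)$. Then $\mathcal B=W^*(u)=\operatorname{span}\{p_0,p_1,p_2\}$ is the maximal abelian subalgebra of matrices diagonal in the Fourier basis $\{e_0,e_1,e_2\}$. Meanwhile $\mathcal D=\ell^\infty(H)$ is the diagonal algebra in the standard basis $\{\delta_0,\delta_1,\delta_2\}$. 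These two bases are mutually unbiased: $|\langle e_j,\delta_k\rangle|^2=\frac13$ for all $j,k$.

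\textbf{Step 1: the structure of $\mathcal Q$.} Since $p$ is a projection, its relative commutant is
\[
\{p\}'\cap\mathcal F=p\mathcal F p\oplus(1-p)\mathcal F(1-p).
\]
Because $p$ has rank one and $1-p$ has rank two, this is $\C p\oplus\mathbb M_2(\C)$, giving $\mathcal Q\cong\C\oplus\mathbb M_2(\C)$.

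\textbf{Step 2: the strict inclusions.} We have $\mathcal B\subseteq\mathcal Q$ because $p\in\mathcal B$ and $\mathcal B$ is abelian. The inclusion is strict by dimension count, since $\dim\mathcal B=3<5=\dim\mathcal Q<9$. Equally, $\mathcal Q\neq\mathcal F$ because $p$ is not central in $\mathbb M_3(\C)$.

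\textbf{Step 3: $\mathcal Q\cap\mathcal D=\C1$.} This is the only step with content. Take a diagonal matrix $d=\sum_k\lambda_k e_{\delta_k}$ commuting with $p$. Commuting with the rank-one projection $p=|e_0\rangle\langle e_0|$ means $e_0$ is an eigenvector of $d$. Now $e_0=\frac1{\sqrt3}(1,1,1)$ has all coordinates nonzero, so $de_0=\lambda e_0$ forces $\lambda_k=\lambda$ for every $k$, that is, $d\in\C1$.

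Equivalently, one can verify this directly from the entries: $(dp-pd)_{kl}=\frac13(\lambda_k-\lambda_l)$, so $dp=pd$ if and only if all $\lambda_k$ coincide.

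The only potential obstacle is bookkeeping the identification $\mathcal D\rtimes H=\mathbb M_3(\C)$, namely making sure $u$ acts as the shift on the same $\ell^2(H)$ on which $\mathcal D$ acts by multiplication. Once that is fixed, the argument is immediate.
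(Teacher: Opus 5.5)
Your proof is correct and takes essentially the same route as the paper: write $\mathcal Q=p\mathcal Fp\oplus(1-p)\mathcal F(1-p)$ with $p$ the rank-one projection onto the constant vectors, get the strict inclusions from the dimension count $3<5<9$, and show that a diagonal matrix commuting with $p$ is scalar. Your entry computation $(dp-pd)_{kl}=\frac13(\lambda_k-\lambda_l)$ spells out the ``basic computations'' that the paper leaves implicit, and the Fourier diagonalization of $u$ is harmless but not needed.
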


\begin{proof}
In the standard representation on $\ell^2(H)$, $\mathcal D$ is the diagonal algebra,
$u$ is the cyclic permutation matrix, and
\[
 p=\frac13\begin{pmatrix}1&1&1\\1&1&1\\1&1&1\end{pmatrix}
\]
is the rank-one projection onto the constant vectors. Thus
\[
 \mathcal Q=p\mathcal Fp\oplus(1-p)\mathcal F(1-p)\cong\C\oplus \mathbb M_2(\C).
\]
Since $\mathcal B$ is abelian and contains $p$, we have $\mathcal B\subset \mathcal Q$.
As $\dim_\C(\mathcal B)=3,\dim_\C(\mathcal Q)=5$, and $\dim_\C (\mathcal F)=9$ we get $\mathcal B\subsetneq \mathcal Q\subsetneq \mathcal F$.
Finally, whenever $d=\operatorname{diag}(d_0,d_1,d_2)$ commutes with $p$, basic computations show that $d_i=d_j$ for all $i,j=1,3$. Hence $\mathcal D\cap \mathcal Q=\C1$.
\end{proof}

\noindent\emph{Proof of Proposition \ref{prop:main}}. Checking that $G$ is icc and non-inner amenable and the actions $G\curvearrowright X$ and $K \curvearrowright X$ are pmp, free and ergodic are straightforward exercises that we leave to the reader. Moreover since the restriction $\Gamma \curvearrowright X$ is the Bernoulli action then $G\curvearrowright X$ is weak mixing.

Let $\mathcal A=L^\infty(X)$ and $\mathcal R=\mathcal A\rtimes K$. The coordinatewise action and
Lemma~\ref{lem:finite} give the canonical trace-preserving identifications
\begin{equation}\label{eq:tensors}
 \begin{aligned}
 \mathcal R&=\bar\otimes _{h\in\Gamma}(\mathcal F,\tr),&
 \mathcal A&=\bar \otimes_{h\in\Gamma}(\mathcal D,\tr|_{\mathcal D}),&
 \mathcal L(K)&=\bar\otimes _{h\in\Gamma}(\mathcal B,\tr|_{\mathcal B}).
 \end{aligned}
\end{equation}
These identifications hold first on finite sets of coordinates and
then extend to the von Neumann algebras generated by their union.
Conjugation by $g\in\Gamma$ permutes the tensor coordinates by left
translation. Therefore the von Neumann subalgebra $\mathcal P=\bar\otimes_{h\in\Gamma}(\mathcal Q,\tr|_\mathcal Q)\subset \mathcal R$
is $\Gamma$-invariant.
\vskip 0.05in
\noindent Let $v_g$ be the canonical unitary corresponding
to $g\in \Gamma$ and define
\begin{equation}\label{eq:N}
 \mathcal M=\mathcal A\rtimes G=\mathcal R\rtimes\Gamma,\qquad
 \mathcal N=\mathcal P\rtimes\Gamma=W^*(\mathcal P,(v_g)_{g\in\Gamma})\subset \mathcal M.
\end{equation}
Since $\mathcal L(K)\subset \mathcal P$, we have $\mathcal L(G)=\mathcal L(K)\rtimes\Gamma\subset \mathcal N$. Next we show that
\begin{equation}\label{eq:intersections}
 \mathcal N\cap \mathcal R=\mathcal P,\qquad \mathcal L(G)\cap \mathcal R=\mathcal L(K).
\end{equation}
From definitions we clearly have $\mathcal P\subseteq \mathcal N\cap \mathcal R$. For the reverse inclusion, fix $x\in \mathcal N \cap \mathcal R$. As $x\in \mathcal N=\mathcal P\rtimes \Gamma$, using its canonical Fourier expansion one has $x=\sum_{g\in \Gamma} x_g v_g$ for $x_g=\mathbb E_{\mathcal P} (xv_g^{-1})\in \mathcal P$. If $\mathbb E_{\mathcal R} \,:\,\mathcal M \to \mathcal R$ is the canonical, trace-preserving conditional expectation, as $\mathcal P\subseteq \mathcal R$, we have that  $x= \mathbb E_{\mathcal R}(x)= \sum_g \mathbb E_{\mathcal R}( x_g v_g )= \sum_g x_g \mathbb E_{\mathcal R}(v_g)=x_e\in \mathcal P$.  The second assertion can be proved in a similar way.

\noindent Choose an element $q\in\mathcal Q\setminus \mathcal B$ in the tensor coordinate indexed
by $e\in\Gamma$. The expectation onto that coordinate maps $\mathcal L(K)$
into $\mathcal B$, so $q\notin\mathcal L(K)$. By \eqref{eq:intersections},
$q\notin\mathcal L(G)$, whereas $q\in \mathcal P\subset \mathcal N$. Thus
$\mathcal L(G)\subsetneq \mathcal N$.

\vskip 0.05in
\noindent For $h\in\Gamma$, let $u_h$ denote the canonical unitary corresponding
to $\delta_h\in K<G$, and consider the element $p_h:=\frac13(1+u_h+u_h^2)$. From definitions it follows that $p_h$ commutes with every element of $\mathcal P$.
\vskip 0.05in
\noindent Let $f\in \mathcal P\cap \mathcal A$ and write $\beta_h={\rm Ad}(u_h)|_{\mathcal A}$. Since $[f,p_h]=0$,
\[
 0=3[f,p_h]
   =(f-\beta_h(f))u_h+(f-\beta_h^2(f))u_h^2.
\]
Since the elements $\delta_h,2\delta_h \in nK$ are distinct, uniqueness
of Fourier coefficients in $\mathcal A\rtimes K$ implies that $\beta_h(f)=f$.
This holds for all $h\in\Gamma$ and hence  $f$ is $K$-invariant. By the
ergodicity of $K\curvearrowright X$, $f$ is scalar.
Using \eqref{eq:intersections}, we obtain
\begin{equation}\label{eq:scalar}
 \mathcal N\cap \mathcal A=\mathcal P\cap \mathcal A=\C1.
\end{equation}
Since $A\ne\C1$, note that \eqref{eq:scalar} also proves $\mathcal N\subsetneq \mathcal M$. 
\vskip 0.05in
\noindent To this end, assume, by contradiction,  that $\mathcal N=\mathcal C\rtimes G$ for a $G$-invariant von Neumann subalgebra
$\mathcal C\subset \mathcal A$. Taking identity Fourier coefficients shows that
$(\mathcal C\rtimes G)\cap \mathcal A=\mathcal C$. Hence \eqref{eq:scalar} implies $C=\C1$,
and thus $\mathcal N=\mathcal L(G)$, contradicting the strict inclusion. \qed

\bigskip
\noindent
Ionu\c{t} Chifan\\
Email: \texttt{ionut-chifan@uiowa.edu}\\
14 MacLean Hall, Department of Mathematics, The University of Iowa, Iowa City, 52246, IA, USA

\medskip
\noindent
Zachary Flood\\
Email: \texttt{zachary-flood@uiowa.edu}\\
14 MacLean Hall, Department of Mathematics, The University of Iowa, Iowa City, 52246, IA, USA

\end{document}